\newcommand{\remove}[1] {}
\newtheorem{definition}{Definition}
\newtheorem{lemma}{Lemma}
\newtheorem{corollary}{Corollary}
\newtheorem{theorem}{Theorem}
\newtheorem{claim}{Claim}
\newtheorem{remark}{Remark}
\newtheorem*{summary*}{Summary of results}
\newcommand\scaledbar[1]{\bar{#1}}
\newtheorem{example}{Example}
\newenvironment{customthm}[1]
  {\innercustomthm}
  {\endinnercustomthm}
  \newtheorem*{dich-thm*}{Dichotomy Theorem}
\newcommand{\n}{\{1,\ldots,n\}}
\newcommand{\f}{\bar{f}}
\newcommand{\un}{uniformly}
\newcommand{\upd}{upd}
\newcommand{\eat}[1]{}
\date{August 31, 2016}
\author {
{ Lefteris Kirousis}\thanks{The research of the first author was carried in part while he was visiting the Computer Science Department of UC Santa Cruz and   was partially  by the European Union (European Social Fund ESF) and Greek national funds through the Operational Program ``Education and Lifelong Learning'' of the National Strategic Reference Framework (NSRF) - Research
Funding Program: ARISTEIA II.}\\[0.1cm] Department of Mathematics\\National and Kapodistrian University of Athens\\ and \\ Computer Technology Institute and Press ``Diophantus"\\
 \href{mailto:lkirousis@math.uoa.gr}{lkirousis@math.uoa.gr}\vspace{\baselineskip}
\and { Phokion G. Kolaitis}
\\[0.1cm]Computer Science Department\\UC Santa Cruz and IBM Research - Almaden \\ \href{mailto:kolaitis@cs.ucsc.edu}{kolaitis@cs.ucsc.edu} \vspace{\baselineskip}
\and {John Livieratos}\\[0.1cm] Department of Mathematics\\National and Kapodistrian University of Athens\\ 
 \href{mailto:jlivier89@math.uoa.gr}{jlivier89@math.uoa.gr}}
\title{Aggregation of Votes \\ with Multiple Positions on Each Issue}
\begin{document}

\maketitle
\begin{abstract}
We consider the problem of aggregating votes cast by a society on a fixed set of issues, where each member of the society may vote for one of several positions on each issue, but  the combination of votes on the various issues is restricted to a  set of feasible voting patterns. We require  the aggregation to be supportive, i.e. for every issue $j$ the corresponding component $f_j$ of every  aggregator on every issue   should satisfy $f_j(x_1, ,\ldots, x_n) \in \{x_1, ,\ldots, x_n\}$. We prove  that, in such a set-up, non-dictatorial aggregation of votes in  a society of some size is  possible if and only if either non-dictatorial aggregation is possible  in a society of only two members  or  
a {\em ternary} aggregator exists  that either on every issue $j$ is a majority operation, i.e. the corresponding component satisfies $f_j(x,x,y) = f_j(x,y,x) = f_j(y,x,x) =x, \forall x,y$, or on every issue is a minority operation, i.e. the corresponding component satisfies $f_j(x,x,y) = f_j(x,y,x) = f_j(y,x,x) =y, \forall x,y.$ We then introduce a notion of {\em \un \/} non-dictatorial aggregator, which is defined to be an aggregator   that on every issue, and when restricted to an arbitrary  two-element subset of the votes for that issue, differs from all projection functions. We first give a  characterization of sets of feasible voting patterns that admit a {\un} non-dictatorial aggregator. Then   making use of Bulatov's dichotomy theorem for conservative constraint satisfaction problems,  we connect social choice theory with combinatorial complexity by proving that if a  set of feasible voting patterns $X$ has a {\un}  non-dictatorial aggregator of some arity  then  the multi-sorted conservative constraint satisfaction problem on $X$, in the sense introduced by  Bulatov and Jeavons, with each issue representing a sort,  is tractable; otherwise it is NP-complete.  
 	
\end{abstract}

\section{Introduction} \label{intro:sec}
Kenneth Arrow initiated the theory of aggregation by establishing his celebrated General Possibility Theorem (also known as Arrow's Impossibility Theorem) \cite{arrow1951social}, which asserts  that it is impossible, even in easy cases, to aggregate in a non-trivial (non-dictatorial) way the preferences of a society. Wilson \cite{wilson1975theory} introduced aggregation  on  general attributes, rather than just preferences,  and proved Arrow's result in this context.  Later on, Dokow and Holzman \cite{dokow2010aggregation} adopted  a framework similar to Wilson's in which
 the voters have a binary position  on a number of issues, and an individual voter's  feasible position patterns  are restricted to lie in a domain $X$. Dokow and Holzman discovered a necessary and sufficient condition for  $X$ to have  a non-dictatorial aggregation that  involves  a property called  {\em total blockedness}, which was originally introduced in \cite{nehring2002stategy}.  Roughly speaking, a domain $X$ is totally blocked if ``any position on any issue can be deduced from any position on any issue" (the precise  definition is given in  Section \ref{totbl} of the present paper). In other words,  total blockedness is a property that refers to the propagation of individuals'  positions from one issue to another.

After this, Dokow and Holzman \cite{dokow2010aggregationnonB} extended their earlier work by allowing the positions to be  non-Boolean (non-binary).
  By extending total blockedness to the non-Boolean framework, they gave a sufficient  condition for the possibility of non-dictatorial aggregation; moreover, they gave a weaker necessary condition, which however may not be  sufficient.

Recently, Szegedy and Xu \cite{szegedy2015impossibility} discovered  necessary and sufficient conditions for non-dictatorial aggregation. Quite remarkably, their approach  relates aggregation theory with
 universal algebra, specifically with the structure of the space of \emph{polymorphisms}, that is, functions under which a relation is closed. It should be noted that properties of polymorphisms have been successfully
  used towards the delineation of the boundary between tractability and intractability for the Constraint Satisfaction Problem  (for an overview, see, e.g., \cite{bulatov2008recent}).

Szegedy and Xu  \cite{szegedy2015impossibility} distinguished the \emph{supportive} (\emph{conservative}, in the terminology of complexity theory) case (studied by Dokow and Holzman  in \cite{dokow2010aggregationnonB}), where the social  position must be equal to the position of at least one individual,  from the \emph{idempotent} (\emph{Paretian}) case, where the social position  may  not agree with any individual position, unless the individuals  are unanimous. In the idempotent case, they gave a necessary and sufficient condition for possibility of non-dictatorial aggregation that involves no   propagation criterion like total blockedness, but only refers to the possibility of non-trivial aggregation for societies of a fixed cardinality (as large as the space of positions). In the  supportive case, however, their necessary and sufficient conditions still involve the notion of a total blockedness.

Here,
we follow Szegedy and Xu's  idea of deploying the algebraic ``toolkit" \cite{szegedy2015impossibility} and 
we prove  that, in the supportive case, non-dictatorial aggregation is possible for  all societies of  some cardinality 
if and only if non-dictatorial aggregation is possible in a society of just two members, or 
a {\em ternary} aggregator exists  that either on every issue $j$ is a majority operation, i.e. the corresponding component $f_j$ satisfies $$f_j(x,x,y) = f_j(x,y,x) = f_j(y,x,x) =x, \forall x,y,$$ or on every issue is a minority operation, i.e. the corresponding component satisfies $$f_j(x,x,y) = f_j(x,y,x) = f_j(y,x,x) =y, \forall x,y.$$ 
For the notions of majority and minority operations see Szendrei \cite[p. 24]{szendrei1986clones}.

 Also, in Section \ref{totbl} we characterize total blockedness not as a property  of propagation of individual positions but simply as a weak form of impossibility domain (no binary non-dictatorial aggregator).   

In the sequel (Section \ref{upd:sect}), we  introduce the notion of {\em \un \/} non-dictatorial aggregator, which is defined to be an aggregator   that on every issue, and when restricted to an arbitrary  two-element subset of the votes for that issue, differs from all projection functions. We first give a characterization of sets of feasible voting patterns that admit {\un} non-dictatorial aggregators. Then making  use  of Bulatov's dichotomy theorem for conservative  Constraint Satisfaction Problems (CSP's), see \cite{bulatov2011complexity, barto2011dichotomy, bulatov2016conservative}, we connect social choice theory with combinatorial complexity by proving that if a  set of feasible voting patterns $X$ has a {\un}  non-dictatorial aggregator of some arity  then  the multi-sorted conservative constraint satisfaction problem on $X$, in the sense introduced by  Bulatov and Jeavons \cite{bulatov2003algebraic}, with each issue representing a sort,  is tractable; otherwise it is NP-complete.

\section{Preliminaries and Summary of Results}\label{prelims}

\subsection{Basic Concepts}
In all that follows, we have a fixed set  $I = \{1, ,\ldots,m\}$ of issues.
Let $\mathcal{A}= \{A_1, \ldots, A_m\}$ be a family of  finite sets, each of cardinality at least 2,  representing the possible positions (voting options) on the issues $1, \ldots, m$, respectively.     When every $A_j$  has cardinality exactly 2, i.e. when for every issue only a ``yes" or ``no" vote is allowed, we say that we are in the {\em binary} (or the  {\em Boolean}) {\em framework}.

Let $X$ be a non-empty subset of $\prod_{j=1}^m A_j$ that represents the feasible  voting patterns.
We write
$X_j$, $j =1 \ldots, m$,  to denote the $j$-th projection of $X$.
From now on, we assume that each $X_j$ has cardinality at least 2 (this is a \emph{non-degeneracy} condition). Throughout  the rest of the paper, unless otherwise declared,  $X$ will denote a set  of feasible voting patterns on $m$ issues, as we just described.

Let $n \geq 2$ be an integer representing the number of voters. The elements of
 $X^n$ can be viewed as $n\times m$  matrices, whose rows correspond to voters and whose columns correspond to issues.  We  write $x^i_j$ to denote the entry of the matrix in row $i$ and column $j$; clearly, it stands for the vote of voter $i$ on issue $j$.   The row vectors of such matrices will be denoted as  $x^1, \ldots, x^n$, and the column vectors as $x_1, \ldots, x_m$.  

\paragraph{Aggregators and Possibility Domains}
Let $\bar{f} = (f_1, \ldots, f_m)$ be an $m$-tuple of $n$-ary functions $f_j: A_j^n \mapsto A_j$.

 An $m$-tuple of functions $\bar{f} = (f_1, \ldots, f_m)$ as above is called  {\em supportive (conservative)} if
 for all $j = 1 \ldots  m$, we have that \[\text{ if } x_j = (x_j^1, \ldots, x_j^n) \in A_j^n, \text{ then } f_j(x_j) = f_j(x_j^1, \ldots, x_j^n) \in \{x_j^1, \ldots, x_j^n\}.\]

An $m$-tuple $\bar{f} = (f_1, \ldots, f_m)$ of ($n$-ary)  functions as above is called an ($n$-ary) {\em aggregator for $X$} if it is supportive and, for all $j = 1,\ldots,  m$ and for all $x_j \in A_j^n, j=1, \ldots, m$, we have that 
\[\text{ if } (x^1, \ldots, x^n ) \in X^n, \text{ then }(f_1(x_1), \ldots, f_m(x_m)) \in X.\]
Note that  $(x^1, \ldots, x^n )$ is an $n\times m$ matrix with rows $x^1, \ldots, x^n $ and columns $x_1, \ldots, x_m, $ whereas $(f_1(x_1), \ldots, f_m(x_m))$ is a row vector required to be in $X$.

An aggregator  $\bar{f} = (f_1, \ldots, f_m)$ is called  {\em trivial (dictatorial) on $X$} if there is a number $d \in \{1, \ldots, n\}$
such that
$(f_1,\ldots f_m)\restriction {X} = ({\rm pr}_d^n,\ldots,{\rm pr}_d^n)\restriction{X}$, i.e., $(f_1,\ldots f_m) $ restricted to $X$ is equal to $({\rm pr}_d^n,\ldots,{\rm pr}_d^n)$ restricted to $X$,
  where ${\rm pr}_d^n$ is  the $n$-ary projection on the $d$-th coordinate; otherwise, $\bar{f}$ is called  {\em non-trivial (non-dictatorial) on $X$}. We say that $X$ {\em has a non-trivial aggregator} if, for some $n\geq 2$, there is  a non-trivial $n$-ary aggregator on $X$.

A set $X$ of feasible voting patterns is called a {\em possibility domain} if it has a non-trivial aggregator. Otherwise, it is called an {\em impossibility domain}. A possibility domain is, by definition,  one where aggregation is possible for societies of some cardinality, namely, the arity of the non-dictatorial aggregator.

Aggregators do what their name indicates, that is, they aggregate positions  on $m$ issues, $j=1, \ldots, m$, from data representing the voting patterns of $n$ individuals on all issues.
  The fact that aggregators are defined as $m$-tuples of functions $A_j^n \mapsto A_j$,  rather than a single function $X^n \mapsto X$, reflects the fact that the social vote is assumed to be extracted issue-by-issue, i.e., the aggregate vote on each issue does not depend on voting data on other issues. The fact that aggregators are assumed to be supportive (conservative) reflects the restriction of our model that the social vote for every issue should be equal to  the vote cast on this issue by at least one individual. Finally, the requirement of non-triviality for aggregators reflects the fact that the aggregate vote  should not be extracted by adopting the vote of a single individual  designated as a ``dictator".

\begin{example}
Suppose that $X$ is a cartesian product $X= Y \times Z$, where $Y\subseteq \prod_{j=1}^l A_j$ and $Z \subseteq \prod_{j=l+1}^{m} A_j$, with $1 \leq l <m$. It is easy to see that $X$ is a possibility domain.\end{example}
 Indeed,  for every $n\geq 2$,  $X$ has non-trivial $n$-ary aggregators of the form
$(f_1,\ldots, f_l,f_{l+1},\ldots,f_m)$, where for some   $d$ and $d'$ with $d\not = d'$, we have that for $j=1, \ldots, l,  f_j= {\rm pr}^n_d$ and for $j=l+1, \ldots, m, f_j = {\rm pr}^n_{d'} $.
Thus, every cartesian product of feasible patterns is a possibility domain. \hfill $\Box$

\remove{
\paragraph{Polymorphisms}
Let $X \subseteq A^m$ be an $m$-ary relation on a set $A$; in this case, $X$ can be thought of as a set of feasible voting patterns on $m$ issues, where the voting options are the same for all issues. Let $f: A^n\rightarrow A$ be a $n$-ary function on $A$. We say that $X$ is {\em closed under} $f$ if for every $n$-tuple $x^1= (x_1^1,\ldots,x^1_m), \ldots, x^n=(x^n_1,\ldots, x^n_m)$ of elements of $X$, the coordinate-wise application $(f(x_1^1,\ldots,x^n_1),\ldots, f(x^1_m,\ldots,x^n_m)$
of $f$ to this $n$-tuple is an element of $X$. We also say that $f$ is a {\em polymorphism} of $X$.
 If $f$ is supportive (i.e., $f(w_1,\ldots,w_n) \in \{w_1,\ldots,w_n\}$, for every $(w_1,\ldots,w_n) \in A^n)$ and $X$ is closed under $f$, then the $m$-tuple
$\bar{f} = (f,\ldots, f)$ is an aggregator for $X$. Clearly, every set is closed under every projection function, which gives rise to trivial aggregators.  If, however, a set $X$ is closed under a supportive function $f$ that is not a projection, then $\bar{f} = (f,\ldots, f)$ is a non-trivial aggregator for $X$, hence $X$ is a possibility domain.  Next, we will consider two types of such possibility. }

Now, 
following \'A. Szendrei \cite[p. 24]{szendrei1986clones}, we define:\begin{definition}
A ternary operation $f: A^3 \mapsto A $ on an arbitrary set $A$ is  a {\em majority} operation if 
\[f(x,x,y) = f(x,y,x) = f(y,x,x) =x, \forall x,y \in A,\]  and  it is a {\em minority} operation  if  \[f(x,x,y) = f(x,y,x) = f(y,x,x) =y, \forall x,y \in A.\]
\end{definition}
We also define
\begin{definition}
A set of feasible voting patterns $X$ as before admits a  {\em majority aggregator} if it admits a ternary aggregator 	$\bar{f} = (f_1,\ldots, f_m)$ such that for all $j=1, \ldots, m$, $f_j$ is a majority operation on $X_j$. $X$ admits a {\em minority aggregator} if it admits a ternary aggregator 	$\bar{f} = (f_1,\ldots, f_m)$ such that for all $j=1, \ldots, m$, $f_j$ is a minority operation on $X_j$.
\end{definition}

Clearly $X$ admits a majority aggregator iff  there is a ternary aggregator $\bar{f} = (f_1, \ldots, f_m)$ for $X$ such that, for  all $j=1, \ldots, m$
and for  all two-element subsets $B_j \subseteq X_j$, we have that
$$f_j{\restriction{B_j}} = {\rm maj},$$ where

\begin{equation*}
	{\rm maj} (x, y, z) = \begin{cases}
 x & \text{ if }  x= y \text{ or } x=z,\\
 y & \text{ if } y = z.
 \end{cases}
\end{equation*}
Also $X$ admits a minority aggregator if there is a ternary $\bar{f} = (f_1, \ldots, f_m)$ for $X$ such that, for  all $j=1, \ldots, m$
and for  all two-element subsets $B_j \subseteq X_j$, we have that
$$f_j{\restriction{B_j}} = \oplus,$$ where

\begin{equation*}
	\oplus (x, y, z) = \begin{cases}
 z & \text{ if }  x= y,\\
 x & \text{ if } y = z,\\
 y & \text{ if } x = z. 
 \end{cases}
\end{equation*}

Clearly as well, in the Boolean framework (for all issues only ``yes" or ``no" votes are allowed) $X$  admits a majority aggregator iff $X$ as a logical relation (as a subset of $\{0,1\}^m$) is  bijunctive  and $X$ admits a minority aggregator iff $X$ as a logical relation is affine (see Schaefer \cite{schaefer1978complexity}).

\begin{example} The set
$$X=\{(a,a,a), (b,b,b), (c,c,c), (a,b,b), (b,a,a), (a,a,c), (c,c,a)\}$$
admits a majority aggregator.
\end{example}
To see this,
let $\bar{f}=(f,f,f)$, where $f:\{a,b,c\} \rightarrow \{a,b,c\}$ is as follows:
\begin{displaymath}
f(u,v,w)= \left\{ \begin{array}{l l} \vspace{1mm}
a  & \textrm{if  $u$, $v$, and $w$ are pairwise different}; \\ \vspace{1mm}
{\rm maj}(u,v,w) & \textrm{otherwise}.
\end{array} \right.
\end{displaymath}
Clearly, if $B$ is a two-element subset of $\{a, b, c\}$, then $f\restriction{B} = {\rm maj}$. So, to show that $X$ admits a majority aggregator, it remains to show that $\bar{f}=(f,f,f)$ is an aggregator for $X$. In turn, this amounts to showing that $\bar{f}$ is supportive and that $X$ is closed under $f$. It is easy to check that $\bar{f}$ is
supportive. To show that $X$ is closed under $f$, let
let $x=(x_1,x_2,x_3), y=(y_1,y_2,y_3), z=(z_1,z_2,z_3)$ be three elements of $X$.
 We have to show that $(f(x_1,y_1,z_1), f(x_2,y_2,z_2), f(x_3,y_3,z_3))$ is also in $X$. The only case that needs to be considered is when $x$, $y$, and $z$ are distinct. There are several subcases that need to be considered. For instance, if
  $x= (a,b,b)$, $y = (a, a, c)$, $z= (c,c,a)$, then $(f(a,b,b), f(a,a,c), f(c,c,a))= (a,a,a) \in X$; the remaining combinations are left to the reader.
 \hfill  $\Box$
\begin{example} The set
$X=\{(a,b,c),(b,a,a),(c,a,a)\}$  admits a minority aggregator.
\end{example}

To see this,
let $\bar{f}=(f,f,f)$, where $f:\{a,b,c\} \rightarrow \{a,b,c\}$ is as follows:
\begin{displaymath}
f(u,v,w)= \left\{ \begin{array}{l l} \vspace{1mm}
a  & \textrm{if  $u$, $v$, and $w$ are pairwise different}; \\ \vspace{1mm}
\oplus(u,v,w) & \textrm{otherwise}.
\end{array} \right.
\end{displaymath}
Clearly, if $B$ is a two-element subset of $\{a, b, c\}$, then $f\restriction{B} = \oplus$. So, to show that $X$ admits a minority aggregator, it remains to show that $\bar{f}=(f,f,f)$ is an aggregator for $X$. In turn, this amounts to showing that $\bar{f}$ is supportive and that $X$ is closed under $f$. It is easy to check that $\bar{f}$ is
supportive. To show that $X$ is closed under $f$, let
let $x=(x_1,x_2,x_3), y=(y_1,y_2,y_3), z=(z_1,z_2,z_3)$ be three elements of $X$.
 We have to show that $(f(x_1,y_1,z_1), f(x_2,y_2,z_2), f(x_3,y_3,z_3))$ is also in $X$. The only case that needs to be considered is when $x$, $y$, and $z$ are distinct, say, $x= (a,b,c)$, $y = (b, a, a)$, $z= (c,a,a)$. In this case, we have that $(f(a,b,c), f(b,a,a), f(c,a,a))= (a,b,c) \in X$; the remaining combinations are similar.   \hfill $\Box$

\smallskip
\remove{
It is interesting to compare the concepts of admitting a majority or a minority aggregator  with the criterion of tractability of multi-sort conservative constraint satisfaction problems given by Bulatov in \cite[Theorem 2.16]{bulatov2011complexity}. Actually, that criterion refers also to the operations   
$\oplus$ and ${\rm maj}$, but, more importantly, the aggregator $\bar{f}$ in Bulatov's work depends on $B_j$, whereas above we assume that there is a single $\bar{f}$ that works for all $B_j$'s.
}

So far, we have given examples of possibility domains only. We close this section with an example of an impossibility domain in the Boolean framework. 
\begin{example}\label{ex:1-in-3}
Let set $W=\{(1,0,0), (0,1,0), (0,0,1)\}$ be the set  of all Boolean tuples of length $3$ in which exactly one $1$ occurs. $W$ is an impossibility domain. 
\end{example}
Indeed, it can be seen that $W$ is not affine and does not admit a binary non-dictatorial aggregator. Therefore by  Theorem \ref{Bbasicthm} in the next subsection, $W$ is an impossibility domain. \hfill$\Box$

Note that, in the context of generalized satisfiability problems studied by Schaefer \cite{schaefer1978complexity}, the set $W$ gives rise to the NP-complete problem {\sc Positive 1-in-3-Sat}. On the other hand, as discussed earlier, the cartesian product $W\times W$ is a possibility domain. Using the results in \cite{schaefer1978complexity}, however,  it can be verified  that the generalized satisfiability problem arising from $W\times W$ is NP-complete. On the other hand, e.g., $\{0,1\}^m$ is trivially a possibility domain and gives rise to a tractable satisfiability problem.  Thus, the property of $X$ being a  possibility domain is not related to the tractability  of the generalized satisfiability problem arising from $X$.  Nevertheless in Section \ref{upd:sect} we  exhibit  the equivalence  between a stronger, {\em uniform}, notion of $X$ being a possibility domain and the weaker notion of the tractability of the {\em multi-sorted} generalized satisfiability problem arising from $X$, where each issue is taken as a sort.  Actually, we prove this equivalence not only for satisfiability but for CSP's whose variables range over arbitrary sets. 
\subsection{Summary of Results} \label{summary:sec}

Our  first basic result is a necessary and sufficient condition for a set of feasible voting patterns to be a possibility domain.

\begin{theorem}\label{basicthm} Let $X$ be a set of feasible voting patterns. Then the following statements are equivalent.
\begin{enumerate}
\item  $X$ is a possibility domain.
 \item $X$  admits a majority aggregator or it admits a minority aggregator or it has a  	non-trivial (non-dictatorial) binary aggregator.
     \end{enumerate}

\end{theorem}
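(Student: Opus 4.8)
The direction $(2)\Rightarrow(1)$ is immediate: a majority aggregator, a minority aggregator, and a non-dictatorial binary aggregator are all non-trivial aggregators by definition, so any of them witnesses that $X$ is a possibility domain. The substance is in $(1)\Rightarrow(2)$, and the natural route is through the algebraic "toolkit" of Szegedy and Xu \cite{szegedy2015impossibility}: the set of all (supportive) aggregators of all arities on $X$ forms a (multi-sorted) conservative clone, and one wants to show that if this clone contains some non-projection member, then it already contains a member of one of the three very restricted special forms. The plan is to first reduce to the case where $X$ has a non-dictatorial aggregator of some arity $n \ge 3$ but no non-dictatorial binary aggregator, and then to manufacture out of it either a majority or a minority ternary aggregator.

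The key step is a local-to-global argument on each coordinate. Suppose $\bar f = (f_1,\dots,f_m)$ is a non-dictatorial $n$-ary aggregator. For each issue $j$ and each two-element subset $B \subseteq X_j$, the restriction $f_j\restriction B$ is a conservative $n$-ary operation on a two-element set, hence (by the classification of Boolean conservative clones, i.e. Post's lattice restricted to conservative/idempotent clones — or directly Schaefer's analysis \cite{schaefer1978complexity}) it is either a projection, or an operation of ``$\mathrm{AND}/\mathrm{OR}$ type'', or a ``near-unanimity / $\mathrm{maj}$ type'' operation, or a ``$\oplus$ type'' operation. I would combine this with the hypothesis that no binary non-dictatorial aggregator exists to rule out the AND/OR-type and binary-witness behaviours globally, using the standard trick that identifying variables in an $n$-ary aggregator produces lower-arity aggregators (so an aggregator that behaves like a semilattice operation on some $B_j$ yields, after suitable identifications, a non-trivial binary aggregator — contradiction). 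This should force that on every coordinate $j$ and every two-element $B\subseteq X_j$, the aggregator behaves like a near-unanimity operation or like a minority operation, and a further consistency/parity argument across arities pins down one ternary operation that is uniformly $\mathrm{maj}$ on all $B_j$, or uniformly $\oplus$ on all $B_j$. By the equivalent reformulations of ``admits a majority aggregator'' and ``admits a minority aggregator'' given just before the theorem, this is exactly conclusion (2).

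The main obstacle I anticipate is precisely the step that converts ``on each fixed two-element set $B_j$ the restriction is of maj-type (resp. $\oplus$-type)'' into the existence of a single ternary $\bar g$ that is $\mathrm{maj}$ (resp. $\oplus$) on \emph{all} two-element subsets of \emph{all} $X_j$ simultaneously. A priori the aggregator could be of maj-type on some pairs and of $\oplus$-type on others, or of maj-type on coordinate $j$ and $\oplus$-type on coordinate $j'$; one must show such mixtures collapse, again by identifying variables and invoking the no-binary-aggregator hypothesis, and then build the ternary witness by composing the given aggregator with projections (a Jónsson/Mal'cev-term style manipulation within the clone). Getting the bookkeeping of these identifications right — and making sure each constructed tuple is still supportive and still preserves $X$ — is where the real work lies; everything else is bookkeeping with the definitions and an appeal to the already-cited dichotomy machinery.
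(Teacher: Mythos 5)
Your overall strategy --- classify each restriction $f_j{\restriction{B_j}}$ to a two-element subset via Post's classification of conservative clones on $\{0,1\}$, then use variable identification together with the absence of a non-dictatorial binary aggregator to exclude the $\land/\lor$ types and to force all two-element restrictions to be simultaneously of majority type or simultaneously of minority type --- is essentially the paper's route for that portion of the argument; the paper packages the uniformization step you worry about as a ``local monomorphism'' lemma (Lemma \ref{univlem}), proved by exactly the pigeonhole-plus-identification trick you gesture at. So that half of the plan is sound, even though your ``consistency/parity argument across arities'' is left unspecified.

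There is, however, a genuine gap at the foundation of the plan, and it is not bookkeeping. Arguing in the contrapositive, Post's classification plus the no-binary hypothesis only gets you to the conclusion that every $n$-ary aggregator restricts to the \emph{same} projection $\mathrm{pr}^n_d$ on every two-element subset of every $X_j$. When some $X_j$ has three or more elements, this does \emph{not} imply that the aggregator equals $\mathrm{pr}^n_d$ on $X$: there are supportive operations (semi-projections) that agree with a projection on every two-element subset of their domain yet are not projections, and a priori such an operation could be a non-dictatorial aggregator for $X$ without ever exhibiting $\land$, $\lor$, $\mathrm{maj}$, or $\oplus$ behaviour on any pair. Your proposal implicitly assumes that non-dictatorship must be witnessed on some two-element subset, which is exactly what fails here. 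Closing this gap is the content of the paper's Lemma \ref{lem:technic}: an induction on the cardinality $s$ of subsets $B_j\subseteq X_j$, showing that an aggregator which is the $d$-th projection on all subsets of size $s-1$ but not on some subset of size $s$ yields, after an identification of coordinates guided by a witnessing tuple (the semi-projection ``Fact'' in the paper), a lower-arity aggregator contradicting the induction hypothesis; one then takes $s=\max_j|X_j|$. Without this step, or an equivalent, your argument establishes the theorem only when every $|X_j|=2$, i.e., essentially only in the Boolean framework.
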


  Clearly, in  Theorem \ref{basicthm}, only the direction $1 \Longrightarrow 2$ requires proof. (proof is given in Section \ref{sec:proofsA})  \remove{
  thus,  if instead of the notions of admitting a majority (minority) operation,  we adopted  weaker ones in which, for every two-element set $B_j$, we required the existence of an $\bar{f} = (f_1, \ldots, f_m)$ (that depends on $B_j$, as  in Bulatov's criterion), so that $f_j{\restriction{B_j}} = {\rm maj}$ (respectively, $f_j{\restriction{B_j}} = \oplus$), then   the theorem would still hold.}

An immediate corollary of Theorem  \ref{basicthm} is the following result, which although can also be immediately deduced from the  combination of the work of Dokow \& Holzman on one hand and Szegedy \& Xu on the other (see \cite[Theorem 2]{dokow2009aggregation} and \cite[Theorem 8]{szegedy2015impossibility}),  was not previously  explicitly mentioned.
\begin{corollary}
\label{cor:3_agg}
Let $X$ be a set of feasible voting patterns. Then the following statements are equivalent.
\begin{enumerate}
  \item $X$ is a possibility domain.
  \item $X$ has a  non-trivial (non-dictatorial) ternary aggregator.
\end{enumerate}
\end{corollary}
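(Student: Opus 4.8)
The plan is to derive Corollary~\ref{cor:3_agg} directly from Theorem~\ref{basicthm}, since the direction $2 \Longrightarrow 1$ is trivial (a non-trivial ternary aggregator is by definition a non-trivial aggregator, so $X$ is a possibility domain). Thus the work is entirely in $1 \Longrightarrow 2$: assuming $X$ is a possibility domain, I must produce a \emph{ternary} non-trivial aggregator. By Theorem~\ref{basicthm}, being a possibility domain splits into three cases, and in two of them we are immediately done: if $X$ admits a majority aggregator or a minority aggregator, then that aggregator is ternary by definition, and it is non-trivial because on any two-element subset $B_j \subseteq X_j$ its $j$-th component restricts to $\mathrm{maj}$ (resp.\ $\oplus$), which is not a projection; since $|X_j| \geq 2$ by the non-degeneracy condition, such a $B_j$ exists, so the aggregator cannot agree with any $\mathrm{pr}^3_d$ on $X$. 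Hence the only case requiring real argument is when $X$ has a non-trivial \emph{binary} aggregator $\bar g = (g_1,\dots,g_m)$ and we must manufacture a non-trivial \emph{ternary} one.

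For that remaining case, the natural approach is to build a ternary aggregator out of the binary one by composition/substitution of variables. The standard trick is to set $f_j(x,y,z) := g_j(g_j(x,y),z)$ (or some similar nesting); one checks that each $f_j$ is supportive because $g_j$ is, that $\bar f = (f_1,\dots,f_m)$ is an aggregator for $X$ because aggregators are closed under this kind of coordinate-wise composition (the composite of polymorphism-like operations is again one), and that $\bar f$ is non-trivial. The non-triviality is the delicate point: we need to ensure that $f_j$ restricted to $X$ does not collapse to a single projection $\mathrm{pr}^3_d$ simultaneously for all $j$. Alternatively, and perhaps more cleanly, one can simply add a dummy third argument: define $f_j(x,y,z) := g_j(x,y)$. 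This is ternary, supportive, an aggregator, and non-trivial on $X$ because $\bar g$ already differs from both binary projections on $X$, hence $\bar f$ differs from $\mathrm{pr}^3_1, \mathrm{pr}^3_2$; to also rule out $\mathrm{pr}^3_3$ one uses that $g_j$ does not depend on its (absent) third coordinate while genuinely using at least one of the first two somewhere. One must double-check that a single $d$ cannot work across all coordinates simultaneously, but this follows from $\bar g$ being non-dictatorial on $X$.

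I expect the main obstacle to be purely bookkeeping: verifying that the constructed ternary $\bar f$ is genuinely non-dictatorial \emph{on $X$}, i.e.\ that there is no single $d \in \{1,2,3\}$ with $(f_1,\dots,f_m)\restriction X = (\mathrm{pr}^3_d,\dots,\mathrm{pr}^3_d)\restriction X$. This requires unwinding the definition of non-triviality for $\bar g$ — which says precisely that no single index $d' \in \{1,2\}$ has $(g_1,\dots,g_m)\restriction X = (\mathrm{pr}^2_{d'},\dots,\mathrm{pr}^2_{d'})\restriction X$ — and transporting this failure through the substitution defining $\bar f$. None of this is conceptually hard given Theorem~\ref{basicthm}; the corollary is essentially a repackaging, which is why the authors note it ``can also be immediately deduced'' from prior work and ``was not previously explicitly mentioned.'' So the proof proposal is: dispatch $2 \Rightarrow 1$ in one line; invoke Theorem~\ref{basicthm} for $1 \Rightarrow 2$; handle the majority and minority cases by noting those aggregators are already ternary and non-trivial; and in the binary case, pad the binary aggregator with a dummy argument (or nest it) to get a ternary aggregator, checking supportiveness, the aggregation property, and — the one genuine verification — non-triviality on $X$.
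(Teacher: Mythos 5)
Your proposal is correct and takes essentially the same route as the paper: the paper's entire proof is the one-line remark that ``a binary aggregator can also be considered as a ternary one, by just ignoring one of its arguments,'' which is exactly your dummy-argument padding, with the majority/minority cases being ternary already. Your extra verification that the padded aggregator cannot coincide with $\mathrm{pr}^3_3$ on $X$ (using non-degeneracy of the $X_j$) is a detail the paper leaves implicit, and it goes through as you describe.
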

\begin{proof}
Indeed a binary aggregator 	can also be considered as a ternary one, by just ignoring one of its arguments.
\end{proof}

Note that, Corollary \ref{cor:3_agg}, in contrast to Theorem \ref{basicthm}, does not give information about the nature of the components of the ternary aggregators entailed when they are restricted to two-element sets of the corresponding set of votes, a necessary information in order to relate results in aggregation theory with complexity theoretic results (see Section \ref{upd:sect}). Observe however that for an arbitrary (supportive) binary aggregator $\bar{f}= (f_1, \ldots, f_m)$, every component $f_j$,   when restricted to a two-element subset $B_j \subseteq X_j$ considered as $\{0,1\}$,  is necessarily a projection function or $\land$ or $\lor$. So the information we mentioned before is  given {\em gratis} for binary aggregators.

  Also implicit  in Dokow and Holzman \cite{dokow2010aggregation} is the following result about the Boolean framework; we will provide an independent proof here (Section \ref{sec:proofsA}).

\begin{theorem}[Dokow and Holzman]\label{Bbasicthm}
Let $X$ be a set of feasible voting patterns in the Boolean framework. Then the following statements are equivalent.
\begin{enumerate}
  \item $X$ is a possibility domain.
  \item $X$ is  affine  or $X$  has a  	non-trivial (non-dictatorial) binary aggregator.	
\end{enumerate}
\end{theorem}


\smallskip

  Note that Dokow and Holzman \cite{dokow2010aggregation, dokow2010aggregationnonB} as well as  Szegedy and Xu  \cite{szegedy2015impossibility} used the concept of a {\em totally blocked} set of feasible voting patterns, a notion involving the propagation of individuals' positions,  to characterize when such a set is an impossibility domain (as mentioned earlier, the precise definition of this concept will be given in Section  \ref{totbl}).

We prove here that total-blockedness is simply a weak form of  impossibility domain restricted to binary aggregators: 
\begin{theorem} \label{thm:tot-block}
Let $X$ be a set of feasible voting patterns. Then the following statements are equivalent.
\begin{enumerate}
  \item $X$ is totally blocked.
  \item $X$ has no  non-trivial (non-dictatorial) binary aggregator.
\end{enumerate}
\end{theorem}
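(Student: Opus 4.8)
The plan is to unwind the definition of total blockedness from Section~\ref{totbl} as a strong-connectivity property of a directed ``deduction graph'' $G(X)$ whose vertices are the \emph{literals} $(j,a)$ with $j \in I$ and $a \in X_j$, and whose edges encode one-step propagations: there is an edge from $(j,a)$ to $(k,b)$ precisely when, relative to some minimal infeasible partial assignment, taking value $\neq a$ on issue $j$ forces value $b$ on issue $k$ (equivalently, a conditional-consequence relation read off the relevant two-element subissues). With this reformulation, $X$ is totally blocked exactly when $G(X)$ is strongly connected, and the theorem becomes: $G(X)$ fails to be strongly connected if and only if $X$ admits a non-dictatorial binary aggregator. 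Both implications require work, unlike in Theorems~\ref{Bbasicthm} and~\ref{basicthm} where one direction is trivial.

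For ``not totally blocked $\Rightarrow$ non-dictatorial binary aggregator'': assuming $G(X)$ is not strongly connected, pick a proper nonempty set $Y$ of literals closed under the edge relation (for instance a sink strongly connected component of the condensation, which is necessarily a proper subset). I would use $Y$ to define a binary aggregator $\bar f = (f_1,\dots,f_m)$ by specifying, for each issue $j$ and each two-element subset $B_j = \{a,b\} \subseteq X_j$, which of ${\rm pr}^2_1$, ${\rm pr}^2_2$, $\wedge$, $\vee$ the restriction $f_j\restriction B_j$ should be --- recall from the discussion following Corollary~\ref{cor:3_agg} that for a supportive binary component these are the only options --- the choice being dictated by which of $(j,a), (j,b)$ lie in $Y$. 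The heart of the matter is checking that $X$ is closed under $\bar f$: given rows $x^1, x^2 \in X$, one argues that the coordinatewise output $(f_1(x^1_1,x^2_1),\dots,f_m(x^1_m,x^2_m))$ cannot violate any minimal infeasible partial assignment, because doing so would exhibit an edge of $G(X)$ leaving $Y$. Supportiveness is immediate, and $\bar f$ is non-dictatorial on $X$ since $Y$ is neither empty nor the full literal set, so $\bar f \restriction X$ is neither $({\rm pr}^2_1,\dots,{\rm pr}^2_1)\restriction X$ nor $({\rm pr}^2_2,\dots,{\rm pr}^2_2)\restriction X$.

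For the converse ``non-dictatorial binary aggregator $\Rightarrow$ not totally blocked'': let $\bar f = (f_1,\dots,f_m)$ be such an aggregator. Assign to each component $f_j$, on each two-element subissue through an ordered pair of positions, a ``type'' in $\{{\rm pr}^2_1, {\rm pr}^2_2, \wedge, \vee\}$. I would then show that closure of $X$ under $\bar f$ forces these types to be compatible along deduction edges: an edge from $(j,a)$ to $(k,b)$ in $G(X)$ constrains the type of $f_k$ at the subissue through $b$ in terms of the type of $f_j$ at the subissue through $a$, so that, in particular, a literal from which $f$ acts as the first projection cannot deduce a literal at which $f$ acts as the second projection. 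Hence, if $G(X)$ were strongly connected, all components would be forced to act as one and the same projection on all subissues, i.e.\ $\bar f$ would be dictatorial on $X$ --- contradiction. I expect the main obstacle to be the bookkeeping in this last step: precisely formulating the type of a binary component relative to an ordered pair of positions, and verifying that the deduction edges --- which may be witnessed by minimal infeasible partial assignments spanning several issues at once --- genuinely enforce the claimed compatibility, including the mixed cases where some components behave as $\wedge$ or $\vee$ rather than as projections.
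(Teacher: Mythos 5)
Your overall strategy is exactly the paper's: for one direction, take a cut $(V_1,V_2)$ of the deduction graph with no edge from $V_1$ to $V_2$, define a binary aggregator whose behaviour on each two-element subissue is read off from the cut, and verify closure by showing that an infeasible output, shrunk to a $B$-MIPE, would exhibit a forbidden edge; for the other, show that each edge of the graph propagates the ``which coordinate does $f$ output'' information, so strong connectivity forces a dictator. The one genuine problem is your vertex set. In the non-Boolean framework of this theorem, the graph $G_X$ of Dokow--Holzman (and of Section~\ref{totbl}) has as vertices the \emph{ordered pairs} $uu'_j$ of distinct elements of $X_j$, not single literals $(j,a)$: the phrase ``taking value $\neq a$ on issue $j$'' does not determine a unique alternative when $|X_j|>2$, the $B$-MIPE witnessing an edge fixes a specific two-element subset $B_j=\{u,u'\}$ \emph{and} an orientation ($x_k=u$ versus $x_k=u'$), and strong connectivity of your literal graph is not equivalent to strong connectivity of $G_X$. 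Concretely, your rule ``the choice of $f_j\restriction B_j$ is dictated by which of $(j,a),(j,b)$ lie in $Y$'' cannot reproduce the paper's construction, which sets $f_k(u,u')=u$ or $u'$ according to whether the \emph{vertex} $uu'_k$ lies in $V_1$ or $V_2$; the $\wedge$/$\vee$ behaviours arise precisely when $uu'_k$ and $u'u_k$ land on opposite sides of the cut, information a literal-indexed set $Y$ cannot carry.

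Repairing the vertex set also dissolves the ``bookkeeping'' you worry about in the converse direction. You do not need to assign one of four types to each unordered subissue and check compatibility in mixed cases: the single implication to prove is that an edge $uu'_k \rightarrow vv'_l$ together with $f_k$ choosing $u$ on the ordered pair $(u,u')$ forces $f_l$ to choose $v$ on $(v,v')$, which follows by aggregating the two feasible extensions obtained from the minimality of the witnessing $B$-MIPE (flip $x_k$ to $u'$, respectively $x_l$ to $v$). Strong connectivity then makes all ordered pairs --- including the two orientations $uu'_k$ and $u'u_k$ of each subissue, which are distinct vertices joined by directed paths --- behave as the same projection, so $\wedge$ and $\vee$ are excluded automatically and $\bar f$ is dictatorial. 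With these corrections your plan coincides with the paper's proof.
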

Theorem \ref{thm:tot-block}, which in some sense shows that the introduction of total blockedness can be avoided,  will be proved in Section \ref{totbl}.

Finally, in Section \ref{upd:sect}, we connect aggregation theory with complexity theory. Towards this we introduce the following stronger notion of  non-dictatorial aggregator.
\begin{definition}\label{def:upd}
An aggregator of a set of feasible voting patterns $X$ is called {\em {\un}  non-dictatorial} if for all $j=1, \ldots,m$ and every two-element subset $B_j \subseteq X_j$, $f_j{\restriction{B_j}}$ is not a projection function. 	$X$ is called {\em {\un} possibility domain} if it admits a uniformly non-dictatorial aggregator of some arity.
\end{definition}
\begin{example}
Let $W$ be the 1-in-3 relation defined in Example \ref{ex:1-in-3}. Then $W \times W$ is a possibility domain, which however is not a {\un} possibility domain in the sense of Definition \ref{def:upd}.
\end{example} 
Indeed, since $W$ is an impossibility domain, it easily follows that for every $n$, all $n$-ary aggregators of $W \times W$ are of the form 
\[(pr_d^n,pr_d^n,pr_d^n,pr_{d'}^n,pr_{d'}^n,pr_{d'}^n),\]  for  $d,d' \in \n$. \hfill $\Box$

Also,  obviously, every $X$ that admits a majority or a minority aggregator is a {\un} possibility domain (\upd).

Let now  $B$ be an arbitrary two-element set taken as  $\{0,1\}$ and consider the  binary logical operations  $\land$ and $\lor$ on $B$ (since we will always deal with both these logical operations concurrently, it does not matter which element of $B$ we take as 0 and which as 1). For notational convenience we define two {\em ternary}  operations on $B$ by:
\[\land^{(3)}(x,y,z)  = x \land y \land z \text{ and }   \lor^{(3)}(x,y,z) = x\lor y \lor z. \]

In Section  \ref{upd:sect} we will prove:
\begin{theorem}\label{thm:updcar}
Let $X$ be a set of feasible voting patterns. The following are equivalent:
\begin{enumerate}
	\item \label{item:one} $X$ is a {\un}  possibility domain.
	\item \label{item:two} For every $j=1, \ldots , m$ and for every two-element subset $B_j \subseteq X_j$, there is an aggregator $\bar{f} = (f_1, \ldots, f_m)$ (depending on $j$ and $B_j$) of some arity such that $f_j{\restriction{B_j}}$ is not a projection function.
	\item \label{item:three} There is ternary aggregator $\bar{f} = (f_1, \ldots, f_m)$ such that for all $j=1, \ldots, m$ and all two-element subsets $B_j \subseteq X_j$, $f_j{\restriction{B_j}} \in \{\land^{(3)}, \lor^{(3)}, {\rm maj}, \oplus\}$   (to which of the four  ternary operations $\land^{(3)}, \lor^{(3)}, {\rm maj}\text{ and }  \oplus$ the restriction $f_j{\restriction{B_j}}$ is equal to depends on $j$ and $B_j$).
	\item \label{item:four} There is ternary aggregator $\bar{f} = (f_1, \ldots, f_m)$ such that for all $j=1, \ldots, m$ and all $x,y \in X_j, f_j(x,y,y) = f_j(y,x,y) = f_j(y,y,x).$
	 \end{enumerate}
\end{theorem}
See also the related  result by Bulatov on ``three basic operations" \cite[Proposition 3.1]{bulatov2011complexity}, \cite[Proposition 2.2]{bulatov2016conservative} (that result however entails only  operations of arity two or three). Some techniques of the proof of Theorem \ref{thm:updcar}  had been used in the aforementioned works by Bulatov (see Section \ref{upd:sect}).\footnote{This came to the attention of the authors only after the present work was  essentially completed.}

\begin{corollary}\label{cor:cart_upd}
Let $X$ be a cartesian product $X= Y \times Z$, where $Y\subseteq \prod_{j=1}^l A_j$ and $Z \subseteq \prod_{j=l+1}^{m} A_j$, with $1 \leq l <m$. Assume that $Y,Z$ are \upd's. Then so is $X$. \end{corollary}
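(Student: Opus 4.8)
The plan is to produce a {\un} non-dictatorial aggregator for $X$ by concatenating {\un} non-dictatorial aggregators for $Y$ and for $Z$. The subtlety — essentially the only one — is that two arbitrarily chosen such aggregators need not share an arity, which would obstruct a direct concatenation; this is exactly what the ternary normal form of Theorem \ref{thm:updcar} is designed to overcome, so the whole corollary will come down to the equivalence of items \ref{item:one} and \ref{item:three} (equivalently, \ref{item:one} and \ref{item:four}) of that theorem.

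First I would invoke Theorem \ref{thm:updcar} on $Y$: since $Y$ is a {\un} possibility domain, it admits a \emph{ternary} aggregator $\bar{g} = (g_1, \ldots, g_l)$ such that, for every $j \in \{1, \ldots, l\}$ and every two-element $B_j \subseteq Y_j$, the restriction $g_j{\restriction{B_j}}$ belongs to $\{\land^{(3)}, \lor^{(3)}, {\rm maj}, \oplus\}$, and in particular is never a projection. In the same way, $Z$ admits a ternary aggregator $\bar{h} = (h_{l+1}, \ldots, h_m)$ with the analogous property over the two-element subsets of the $Z_j$, $j \in \{l+1, \ldots, m\}$.

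Next I would set $\bar{f} = (g_1, \ldots, g_l, h_{l+1}, \ldots, h_m)$ and verify it is a ternary aggregator for $X = Y \times Z$. Supportiveness is inherited componentwise from $\bar{g}$ and $\bar{h}$. For the closure condition, given three rows $x^1, x^2, x^3 \in X$ I would write $x^i = (y^i, z^i)$ with $y^i \in Y$ and $z^i \in Z$; applying $\bar{f}$ columnwise returns, in its first $l$ coordinates, the value of $\bar{g}$ on the matrix with rows $y^1, y^2, y^3$ — which lies in $Y$ — and in its last $m-l$ coordinates the value of $\bar{h}$ on $z^1, z^2, z^3$ — which lies in $Z$ — so the output is in $Y \times Z = X$. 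Finally, since $Y$ and $Z$ are nonempty we have $X_j = Y_j$ for $j \le l$ and $X_j = Z_j$ for $j > l$ (each of cardinality at least $2$ by the non-degeneracy of $Y$ and $Z$), so the property recorded in the previous step says precisely that $\bar{f}$ satisfies item \ref{item:three} of Theorem \ref{thm:updcar} for $X$; the implication \ref{item:three} $\Rightarrow$ \ref{item:one} then yields that $X$ is a {\un} possibility domain.

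I do not expect any genuine obstacle here: once the arities have been equalized via Theorem \ref{thm:updcar}, the verification is a routine componentwise check, and the only bookkeeping is the elementary identification of the projections of the product with those of its factors. An entirely parallel argument works using the condition in item \ref{item:four} in place of the explicit list of four operations in item \ref{item:three}, should that be more convenient.
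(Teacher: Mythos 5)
Your proof is correct, but it routes through a different part of Theorem~\ref{thm:updcar} than the paper does. You apply the hard implication \eqref{item:one}~$\Rightarrow$~\eqref{item:three} to each factor $Y$ and $Z$ separately, obtaining ternary normal-form aggregators of equal arity, concatenate them, and then only need the trivial implication \eqref{item:three}~$\Rightarrow$~\eqref{item:one} on the product; the payoff is a single explicit uniformly non-dictatorial ternary aggregator for $X$. The paper instead sidesteps the arity-equalization issue entirely by targeting item~\eqref{item:two}: for each fixed $j\le l$ and two-element $B_j\subseteq X_j=Y_j$ it takes \emph{any} $n$-ary uniformly non-dictatorial aggregator $\bar g$ of $Y$ (available directly from Definition~\ref{def:upd}, no normal form needed) and appends $m-l$ copies of ${\rm pr}^n_1$ to get an aggregator of $X$ whose $j$-th component is not a projection on $B_j$; since item~\eqref{item:two} allows the aggregator to depend on $(j,B_j)$, there is no need to combine the $Y$- and $Z$-aggregators into one object, and the hard direction \eqref{item:two}~$\Rightarrow$~\eqref{item:three} of the theorem is invoked only once, on $X$ itself, at the very end. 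Both arguments are sound; yours is more constructive, the paper's is lighter on its use of the theorem. Your componentwise verification that the concatenation is a supportive aggregator for $Y\times Z$, and the identification $X_j=Y_j$ (resp.\ $Z_j$) using nonemptiness of the other factor, are both correct and complete.
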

\begin{proof}
As a first case,    let $j =1, \ldots, l$. Also  let $B_j \subseteq X_j$ be a two-element set. Then there is an $n$-ary
aggregator $\bar{g} = (g_1, \ldots, g_l)$ such that $g_j{\restriction}{B_j}$ is not a projection.  Define $\bar{f}$ to be $(g_1,\ldots,  g_l, {\rm pr}^n_1, \ldots,  {\rm pr}^n_1)$, with $m-l$ copies of ${\rm pr}^n_1$ appended to $\bar{g}$. Then, since $j\leq l$,  $f_j{\restriction}{B_j} = g_j{\restriction}{B_j}$  is not a projection function. Similarly in the case $j=l+1, \ldots, m$. The result follows by item \eqref{item:two} of Theorem~\ref{thm:updcar}. \end{proof}

To state our result that connects the property of $X$ being a   {\un} possibility domain  with the property of tractability of a multi-sorted constraint satisfaction problems,    
we first introduce   some notions following closely \cite{bulatov2003algebraic, bulatov2011complexity}.

Again we consider a fixed set  $I = \{1, ,\ldots,m\}$, this time representing   {\em sorts},
and a family of finite sets 
 $\mathcal{A} = 
 \{A_1, \ldots, A_m\}$,  each of cardinality at least 2, representing the values the corresponding sorts can take. 
For any list of indices $(i_1, \ldots, i_k) \in I$ (not necessarily distinct),  a subset $R$ of $A_{i_1}\times \cdots \times A_{i_k}$, together with the list $(i_1, \ldots, i_k)$, will be called a multi-sorted relation over $\mathcal{A}$ with arity k and signature $(i_1, \ldots,  i_k)$. For any such relation $R$, the signature of $R$ will be denoted $\sigma(R)$. Any set of multi-sorted relations over $\mathcal{A}$ is called a multi-sorted constraint language over $\mathcal{A}$.
\begin{definition}[Multi-sorted CSP]\label{def:mcsp} Let $\Gamma$  be a multi-sorted constraint language over a collection of sets
$\mathcal{A} = (A_1, \ldots, A_m)$.
The multi-sorted constraint satisfaction
problem $\text{MCSP}(\Gamma)$ 
is defined to be the decision problem with instance $(V , \mathcal{A}, \delta, \mathcal{C})$, where $V$ is a finite set of variables; $\delta$  is a mapping from $V$ to $I$, called the sort-assignment function ($v$ belongs to the sort $\delta(v)$); $\mathcal{C}$ is a set of constraints where each constraint $C \in \mathcal{C}$  is a pair $(s,R)$, such that $s = (v_1 , . . . , v_k )$ is a tuple of variables of length $k$, called the constraint scope; R is an $k$-ary multi-sorted relation over $\mathcal{A}$  with signature $(\delta(v_1), . . . , \delta(v_k))$, called the constraint relation. 
The question is whether there exists a value-assignment, i.e. a mapping $\phi: V \mapsto \bigcup_{i=1}^m A_i$, such that, for each variable $v \in V, \phi(v) \in A_{\delta(v)}$, and for each constraint $(s, R) \in \mathcal{C}$, with $s = (v_1,...,v_k)$, the tuple $(\phi(v_1),...,\phi(v_k))$ belongs to $R$.
\end{definition}

A multi-sorted constraint language $\Gamma$ over $\mathcal{A}$ is said to be conservative  if for any $A \in \mathcal{A}$ and any $B \subseteq A$,   $B \in \Gamma$ (as a relation over A).

If $ X \subseteq \prod_{j=1}^m A_j $ is a set of feasible voting patterns, $X$ can be considered as multi-sorted relation with signature $(1, \ldots, m)$ (one sort for each issue).
Let $\Gamma^{\rm cons}_X$ be the multi-sorted conservative constraint language comprised of $X$ and all subsets of all $A_j, j=1, \ldots, m$, the latter  considered as relations over $A_j$.  

If all $A_j$ are equal and $|I|=1$, i.e. if there is no discrimination between sorts, then $\text{MCSP}(\Gamma)$ is denoted just by $\text{CSP}(\Gamma)$. 
If the sets of  votes for all issues are equal, then it is possible to consider a feasible set of votes $X$ as a one-sorted relation (all issues are of the same sort). In this framework, and in case all $A_j$ are equal to $\{0,1\}$, $\text{CSP}(\Gamma_X^{\rm cons})$ coincides with the problem   Schaefer \cite{schaefer1978complexity} introduced and called  the ``generalized satisfiability problem with constants", in his notation ${\rm SAT}_{\rm C}(\{X\})$ (the presence of the sets $\{0\}$ and $\{1\}$ in the constraint language amounts to allowing  constants, besides variables,   in the constraints).

We will prove (Section \ref{upd:sect}) that:
\begin{theorem}\label{thm:upd}
If $X$ is a uniformly possibility domain then $\text{MCSP}(\Gamma^{\rm cons}_X)$ is tractable; otherwise it is NP-complete.
\end{theorem}
Theorem \ref{thm:upd} is a corollary of  Theorem \ref{thm:updcar}  and Bulatov's dichotomy theorem for conservative multi-sorted constraint languages \cite[Theorem 2.16]{bulatov2011complexity}. For details see Section \ref{upd:sect}.

\begin{example}
Let $Y = \{0,1\}^3 \setminus \{(1,1,0)\}$ and let 
$$Z= \{(1,1,0), (0,1, 1), (1,0,1), (0,0,0)\}.$$ Then $Y, Z$ and hence, by Corollary \ref{cor:cart_upd}, $X= Y \times Z$  are \upd's, and therefore, by Theorem \ref{thm:upd},  $\text{MCSP}(\Gamma^{\rm cons}_X)$ is tractable. However,  the generalized satisfiability problem with constants  $\text{SAT}_{\text{C}}(\{X\})$ (equivalently $\text{CSP}(\Gamma^{\rm cons}_X)$) is NP-complete. 
\end{example}
Indeed, in Schaefer's \cite{schaefer1978complexity}	terminology, $Y$ is Horn, equivalently co-ordinate-wise closed under $\land$,  but is not dual Horn (equivalently not co-ordinate-wise closed under $\lor$), nor affine (equivalently, does not admit a minority aggregator) nor bijunctive (equivalently, does not admit a majority aggregator). Therefore, by co-ordinate-wise closureness under $\land$, $Y$ is \upd. Also, $Z$ is affine, but not Horn, nor dual Horn neither bijunctive. So, being affine,  $Z$ is \upd. The NP-completeness of $\text{SAT}_{\text{C}}(\{X\})$ (equivalently, the NP-completeness of $\text{CSP}(\Gamma^{\rm cons}_X)$) follows from Schaefer's archetypal dichotomy theorem \cite{schaefer1978complexity}, because $X$ is not Horn, dual Horn, affine, nor bijunctive.

\paragraph{The meta-problems} Given a family 
$\mathcal{A} = \{ A_1,  \ldots, A_m \}$ 
and a subset $X \subseteq \prod_{j=1}^m A_j $ as input, adopting a terminology used in complexity theory, we call  {\em meta-problems} the questions of (i)  whether $X$ is a possibility domain and (ii) whether $X$ is a {\un} possibility domain. 

 Theorem \ref{basicthm}, or even by Corollary \ref{cor:3_agg}, (respectively, Theorem \ref{thm:updcar})
  easily implies that  meta-problem (i) (respectively, meta-problem (ii)) is in NP. Indeed  we only have to guess  suitable ternary or binary operations and check for closureness. However, even if the sizes of all $A_j$'s are bounded by a constant (but $m$, the number of issues/sorts, is unbounded), then the problems could be NP-hard as there are exponentially large number of ternary or binary aggregators. The question of exactly bounding  this complexity is  the object of ongoing research. Of course, if besides the cardinality of all $A_j$, also their number $m$ is bounded, then Theorem~\ref{basicthm}, or even  Corollary~\ref{cor:3_agg},  (respectively, Theorem~\ref{thm:updcar}) implies that   meta-problem~(i) (respectively, meta-problem~(ii)) is tractable (for the first meta-problem, this was essentially observed by Szegedy and Xu \cite{szegedy2015impossibility}).
 
\section{Proofs of Theorems \ref{basicthm}  and \ref{Bbasicthm}} \label{proof:sect} \label{sec:proofsA}
Before proving Theorem \ref{basicthm}, we  introduce and illustrate a new notion, and establish two lemmas.

 Let $X$ be a set of feasible voting patterns and let $\bar{f} = (f_1, \ldots, f_m)$ be an $n$-ary aggregator for $X$.  
 \begin{definition}\label{def:locmon}
  We say that $\bar{f}$ is {\em locally monomorphic} if for all indices $i$ and $j$ with
$1\leq i, j\leq m$, for all   two-element subsets $B_i \subseteq X_i$ and $ B_j \subseteq X_j$,   for every bijection   $g: B_i \mapsto B_j$, and for all column vectors $x_i =(x_i^1, \ldots, x_i^n) \in B_i^n$, we have that
\[f_j(g(x_i^1), \ldots,g(x_i^n)) = g( f_i(x_i^1, \ldots, x_i^n)). \]
\end{definition}
Intuitively, the above definition says that, no matter how we identify the two elements of  $B_i$ and $B_j$ with 0 and 1,   the restrictions $f_i{\restriction{B_i}}$ and $f_j{\restriction{B_j}}$   are  equal as functions.

Notice that in the definition we are allowed to  have $i=j$, which implies that
 if in  a specific $B_j$ we interchange  the values  0 and 1 in the arguments of $f_j{\restriction{B_j}}$, then the bit that gives the image of $f_j{\restriction{B_j}}$ is  flipped.

 It follows immediately from the definitions that if an aggregator is trivial (dictatorial), then it is locally monomorphic. As we shall see next, different types of examples of locally monomorphic aggregators arise in  locally affine and locally bijunctive sets of feasible voting patterns.

 \begin{example} \label{loc-unim1-exam}
  Let $X$ be a  set of feasible voting patterns that admits a ternary aggregator  $\bar{f}=(f_1,...,f_m)$ that is either a majority or a minority operation. Then $\bar{f}=(f_1,...,f_m)$ is locally monomorphic.
\end{example}

 Suppose that $\bar{f}=(f_1,...,f_m)$ is a minority operation, i.e.  for every $j$ with $1\leq j\leq m$ and every two-element set $B_j\subseteq X_j$,
we have that $f_j \restriction_{B_j}= \oplus$.
 Let $i$, $j$ be such that $1\leq i, j, \leq m$, let $B_i =\{a,b\}  \subseteq X_i$, and let $B_j = \{c,d\}  \subseteq X_j$  (we make no assumption for the relation,  if any, between $a,b,c,d$).
 There are exactly two bijections  $g$ and $g'$  from $B_i$ to $B_j$, namely,
$$g(a)=c \ \& \ g(b)=d$$
$$g'(a)=d \ \& \ g'(b)=c$$
Suppose $(x,y,z)$ is the $i$-th column vector of $X^3$, with $x,y,z \in B_i$. Since $|B_i|=|B_j|=2$, it holds that $f_i \restriction_{B_i}=  \oplus = f_j \restriction_{B_j}$.
Without loss of generality, suppose $x=a, y=z=b$. Then
\begin{equation*}
\begin{split}
f_j(g(x),g(y),g(z)) & =f_j(c,d,d) \\
                                            & =\oplus(c,d,d)=c \\
                                            & =g(a)= g(\oplus(a,b,b)) \\
																						& =g(f_j(x,y,z)).
\end{split}
\end{equation*}
An analogous statement holds for $g'$. Since $i,j$ where arbitrary, we conclude that $\bar{f}$ is locally monomorphic. 

The proof for the case when $\bar{f}$ is a majority operation is similar.
\hfill  $\Box$

We  now  present the first lemma needed in the proof of Theorem \ref{basicthm}.

 \begin{lemma}\label{univlem} Let $X$ be a set of feasible voting patterns. If every binary aggregator for $X$ is trivial (dictatorial) on $X$, then, for every $n\geq 2$, every $n$-ary aggregator for $X$ is locally monomorphic.
 \end{lemma}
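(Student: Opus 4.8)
The plan is to argue by contradiction: suppose some $n$-ary aggregator $\bar{f} = (f_1,\ldots,f_m)$ fails to be locally monomorphic, and from it manufacture a binary aggregator that is non-trivial, contradicting the hypothesis. So fix indices $i,j$, two-element sets $B_i \subseteq X_i$, $B_j \subseteq X_j$, a bijection $g\colon B_i \to B_j$, and a column vector $x_i = (x_i^1,\ldots,x_i^n) \in B_i^n$ witnessing the failure, i.e.\ $f_j(g(x_i^1),\ldots,g(x_i^n)) \neq g(f_i(x_i^1,\ldots,x_i^n))$. Identifying $B_i$ and $B_j$ with $\{0,1\}$ via $g$, this says the Boolean functions $f_i\!\restriction_{B_i}$ and $f_j\!\restriction_{B_j}$ (each of which, being supportive on a two-element set, is an honest Boolean function $\{0,1\}^n \to \{0,1\}$) disagree at the point $x_i$. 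The special case $i = j$, $g$ the identity already gives that $f_i\!\restriction_{B_i}$ is not self-dual, while the case of $g$ a transposition handles value-flipping; in general I want to extract from the family $\{f_k\!\restriction_{B_k}\}$ enough Boolean asymmetry to build a binary non-dictator.

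The key step is a reduction of arity. Given an $n$-ary aggregator $\bar f$, and any way of collapsing its $n$ arguments into two groups (say via a surjection $\pi\colon \{1,\ldots,n\} \to \{1,2\}$), define $g_k(u,v) = f_k(w^1,\ldots,w^n)$ where $w^t = u$ if $\pi(t)=1$ and $w^t = v$ if $\pi(t)=2$. Each such $\bar g = (g_1,\ldots,g_m)$ is again a supportive $m$-tuple and is readily checked to be a binary aggregator for $X$ (it is a composition of $\bar f$ with the binary projections, hence preserves $X$). By the hypothesis of the lemma, every such $\bar g$ is dictatorial, i.e.\ equals $(\mathrm{pr}_1^2,\ldots,\mathrm{pr}_1^2)$ or $(\mathrm{pr}_2^2,\ldots,\mathrm{pr}_2^2)$ on $X$ — and crucially the \emph{same} choice of coordinate for all issues $k$ simultaneously. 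The plan is to exploit this: the restriction $f_k\!\restriction_{B_k}$, as an $n$-ary Boolean function, has the property that \emph{every} identification of its inputs into two groups yields a projection. I want to show this forces $f_k\!\restriction_{B_k}$ itself to be one of the projections $\mathrm{pr}_1^n,\ldots,\mathrm{pr}_n^n$, and moreover that the \emph{index} $d$ of this projection is independent of $k$ and of $B_k$ (because the collapsing argument ties the issues together: if $f_i\!\restriction_{B_i} = \mathrm{pr}_d^n$ but $f_j\!\restriction_{B_j} = \mathrm{pr}_{d'}^n$ with $d \neq d'$, choose $\pi$ separating $d$ from $d'$ to get a $\bar g$ that is $\mathrm{pr}_1^2$ on issue $i$ but $\mathrm{pr}_2^2$ on issue $j$, contradicting uniform dictatorship). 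Once all $f_k\!\restriction_{B_k}$ are the same projection $\mathrm{pr}_d^n$ up to the identification with $\{0,1\}$, local monomorphism is immediate: $f_j(g(x_i^1),\ldots,g(x_i^n)) = g(x_i^d) = g(f_i(x_i^1,\ldots,x_i^n))$.

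The main obstacle is the elementary but slightly delicate combinatorial lemma at the heart of the argument: \emph{a Boolean function $h\colon \{0,1\}^n \to \{0,1\}$ all of whose ``two-block collapses'' are binary projections must itself be an $n$-ary projection.} One has to be careful here, since a collapse of $h$ could a priori be a binary projection onto the block containing coordinate $1$ for one choice of $\pi$ and onto the other block for another, and I must show these are consistent enough to pin down a single relevant coordinate of $h$; comparing the collapse that isolates coordinate $d$ alone (all other coordinates in one block) across different $d$ should do it. I also need to be slightly careful that the bijection $g$ in Definition~\ref{def:locmon} may reverse the $0/1$ labelling, but since I am proving all restrictions are honest projections (which are invariant under simultaneously flipping all inputs and the output — wait, they are not: $\mathrm{pr}_d^n$ composed with flips becomes $1 - \mathrm{pr}_d^n$), this flip case must be cross-checked; however the $i=j$, $g = $ transposition instance of the to-be-proved conclusion, combined with the hypothesis applied to the collapse separating coordinate $d$, rules out any non-self-dual behaviour, so projections are exactly what survive. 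Modulo this Boolean lemma, everything else is bookkeeping: verifying that collapses of aggregators are aggregators, and chasing the uniform index $d$ through the issues.
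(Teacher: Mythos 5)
There is a genuine gap: the ``elementary but slightly delicate combinatorial lemma'' on which your whole argument rests is false. The ternary majority operation ${\rm maj}$ is a supportive Boolean function all of whose two-block collapses are binary projections (${\rm maj}(u,u,v)=u$, ${\rm maj}(u,v,v)=v$, ${\rm maj}(u,v,u)=u$), yet ${\rm maj}$ is not a projection; the same holds for $\oplus$. Your proposed repair---comparing the collapses that isolate a single coordinate $d$---cannot pin down a relevant coordinate, because for ${\rm maj}$ every such collapse projects onto the \emph{large} block, for every choice of $d$. More fundamentally, the statement you are aiming for (every restriction $f_k{\restriction{B_k}}$ equals a common projection ${\rm pr}^n_d$) is strictly stronger than local monomorphism and is false in exactly the cases this lemma is needed for: a domain $X$ with no non-trivial binary aggregator may still admit a majority or minority aggregator (this is the whole point of Theorem \ref{basicthm}), and such an aggregator is locally monomorphic (Example \ref{loc-unim1-exam}) without any of its restrictions being projections.

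The good news is that your collapsing construction is sound (a two-block collapse of an $n$-ary aggregator is indeed a binary aggregator, hence dictatorial by hypothesis) and can be redirected to prove the actual statement without the false intermediate step. To show $f_i{\restriction{B_i}}$ and $f_j{\restriction{B_j}}$ agree at an argument $a=(a^1,\ldots,a^n)\in\{0,1\}^n$: if $a$ is constant this is supportiveness; otherwise collapse along the partition $\{t: a^t=0\}$, $\{t: a^t=1\}$ to get a binary aggregator $\bar g$ with $g_k(0,1)=f_k{\restriction{B_k}}(a)$ for $k=i,j$, and dictatoriality of $\bar g$ (the \emph{same} coordinate $d$ for all issues) forces $g_i(0,1)=g_j(0,1)$, i.e.\ $f_i{\restriction{B_i}}(a)=f_j{\restriction{B_j}}(a)$; one checks the two possible identifications of $B_i,B_j$ with $\{0,1\}$ the same way. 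Note that this pointwise comparison never claims the restrictions are projections. The paper takes a different but related route: induction on the arity $n$, using the pigeonhole principle to find two equal coordinates of $a$ when $n\geq 3$, merging them to produce an $(n-1)$-ary aggregator that would violate the inductive hypothesis, with the $n=2$ base case given directly by the hypothesis of the lemma.
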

 \begin{proof}
 	Under the hypothesis that all binary aggregators are trivial, the conclusion is obviously true for binary aggregators.  By induction, suppose that the conclusion is true for all $(n-1)$-ary aggregators, where $n \geq 3$. Consider an $n$-ary aggregator $\bar{f} = (f_1, \ldots, f_m)$  and a pair $(B_i,B_j)$ of two-element subsets $B_i \subseteq X_i$ and $B_j \subseteq X_j$. To render the notation less cumbersome, we will take the liberty to  denote the two elements  of both $B_i$ and $B_j$  as $0$ and $1$. Assume now, towards a contradiction, that  there are  a  column-vector $(a^1, \ldots, a^n)$ with $a^i \in \{0,1\}$, $1\leq i\leq n$,    a ``copy" of this vector belonging  to $B_i^n$,   another copy belonging to  $B_j^n$, such that  $f_i(a^1, \ldots, a^n) \neq f_j(a^1, \ldots, a^n)$. Since $n\geq 3$, by the pigeonhole principle applied to two holes and at least three pigeons, there is a pair of  coordinates of $(a^1, \ldots, a^n)$ that coincide. Without loss of generality, assume  that these two coordinates are the two last ones, i.e., $a^{n-1} = a^n$. We  now define an $(n-1)$-ary aggregator $\bar{g} =(g_1, \ldots, g_m)$ as follows: given $n-1$ voting patterns $(x_1^i, \ldots, x_m^i)$, $i=1, \ldots, n-1$, define
 	$n$
 	voting patterns by
 	just repeating the last one
 	and then for all $j=1, \ldots, m$, define
 	\[g_j(x_j^1, \ldots, x_j^{n-1}) =  f_j(x_j^1, \ldots, x_j^{n-1}, x_j^{n-1}).\]
 	It is straightforward to verify that $\bar{g}$ is an $(n-1)$-ary aggregator on $X$ that is not locally monomorphic, which contradicts the inductive hypothesis.  \end{proof}

 \begin{remark}{\em
 The preceding argument  generalizes to  arbitrary cardinalities in the following way:  if every aggregator of arity at most $s$ on $X$  is trivial, then every aggregator on $X$ is {\em $s$-locally monomorphic}, meaning that for every  $k\leq s$ and for all  sets $B_j\subseteq X_j$ of cardinality  $k$,  the functions $f_j{\restriction{B_j}}$ are all equal up to bijections between the  $B_j$'s.}
\end{remark}

We continue with a technical lemma whose proof was  inspired by a proof in Dokow and Holzman \cite[Proposition 5]{dokow2010aggregationnonB}.

\begin{lemma}\label{lem:technic} Assume that for all integers $n\geq 2$ and for every $n$-ary aggregator $\bar{f} =(f_1, \ldots, f_m)$, there is an integer $d \leq n$ such that for  every integer $j\leq  m$ and every two-element subset $B_j \subseteq X_j$,  the restriction $f_j{\restriction{B_j}}$ is equal to ${\rm pr}^n_d$, the $n$-ary projection on the $d$-th coordinate.	 Then
	for all integers $n\geq 2$ and for every $n$-ary aggregator $\bar{f} =(f_1, \ldots, f_m)$ and for all $s \geq 2$,  there is an integer $d \leq n$ such that for  every integer $j\leq  m$ and every subset $B_j \subseteq X_j$ of cardinality at most $s$,  the restriction $f_j{\restriction{B_j}}$ is equal to ${\rm pr}^n_d$.
\end{lemma}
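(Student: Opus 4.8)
The plan is to prove the statement by induction on $s$, the base case $s = 2$ being exactly the hypothesis. So assume the conclusion holds for all subsets of cardinality at most $s - 1$, where $s \geq 3$, and fix an $n$-ary aggregator $\bar f = (f_1, \ldots, f_m)$. By the inductive hypothesis there is an index $d \leq n$ such that $f_j{\restriction{B_j}} = {\rm pr}^n_d$ for every $j$ and every $B_j \subseteq X_j$ with $|B_j| \leq s - 1$. I must show the same $d$ works for subsets of size up to $s$. Suppose not: there is some $j$ and some $B_j \subseteq X_j$ with $|B_j| = s$ and a column vector $(a^1, \ldots, a^n) \in B_j^n$ with $f_j(a^1, \ldots, a^n) \neq a^d$. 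The key observation is that, since $|B_j| = s \geq 3$ but the actual multiset of entries $\{a^1, \ldots, a^n\}$ appearing in that column need not exhaust $B_j$: if it uses at most $s - 1$ distinct values, the value $f_j(a^1,\ldots,a^n)$ lies in a sub-subset of size $\leq s-1$, and applying the inductive hypothesis to that smaller subset already forces $f_j(a^1,\ldots,a^n) = a^d$, a contradiction. So I may assume the column $(a^1, \ldots, a^n)$ uses all $s$ elements of $B_j$.

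The heart of the argument is then a "collapsing" or "folding" trick in the spirit of the proof of Lemma~\ref{univlem}: from the aggregator $\bar f$, restricted suitably, I want to build a derived aggregator on a set of feasible voting patterns whose $j$-th column set has been reduced to size $s - 1$, in such a way that the inductive hypothesis applies and yet the bad behavior of $f_j$ survives the reduction. The cleanest route is to pick two distinct elements $p, q \in B_j$ and consider identifying them: define a retraction $r_j : X_j \to X_j$ that collapses $q$ to $p$ (or more precisely works on $B_j$) and is the identity elsewhere, and try to push the aggregator through. One needs $r_j \circ f_j$ to agree with $f_j$ composed with $r_j$ on the relevant columns, which need not hold for arbitrary $f_j$ — so the trick has to be applied only to columns lying entirely in $B_j$, exactly as the pigeonhole-based folding in Lemma~\ref{univlem} worked only because a repeated coordinate let one drop an argument. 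Here, instead of repeating an argument (which reduces arity), I reduce the size of the position set: replace $X$ by $X' = \{x \in X : x_j \neq q\}$, note $X'$ is again a legitimate set of feasible voting patterns (checking non-degeneracy: since $|B_j| = s \geq 3$, the $j$-th projection of $X'$ still has size $\geq 2$), observe that $\bar f$ restricts to an aggregator of $X'$, and on $X'$ the $j$-th column set has size $\leq s - 1$, so the inductive hypothesis pins $f_j{\restriction{X'_j}} = {\rm pr}^n_d$. Iterating this over all elements of $B_j$ one wishes to remove, one whittles $B_j$ down to a two-element set and concludes — but one must arrange that the column witnessing the failure of projection still lies inside the restricted domain at each stage, which is the delicate bookkeeping point.

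The main obstacle, and the place where I expect to spend the most care, is precisely that an aggregator does \emph{not} in general commute with a retraction on one coordinate, so the naive "identify $q$ with $p$" map cannot simply be composed with $\bar f$; one must instead restrict the domain $X$ (removing feasible patterns), verify the restricted object is still a valid set of feasible voting patterns satisfying non-degeneracy, and track that the hypothesized "bad" column $(a^1, \ldots, a^n)$ — which uses all $s$ values of $B_j$ — cannot be kept inside \emph{any} domain whose $j$-th column has shrunk below $s$. This tension is resolved by the earlier observation: the bad column uses all $s$ values, but $f_j$ of it must be one of those $s$ values, and by choosing which element to delete cleverly (delete an element $\neq f_j(a^1,\ldots,a^n)$ and $\neq a^d$, possible since $s \geq 3$ gives room), I can pass to a domain of column-size $s - 1$ that still contains a column forcing $f_j \neq {\rm pr}^n_d$ — contradicting the inductive hypothesis. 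I would write this final step as the explicit contradiction, being careful about the two sub-cases according to whether $f_j(a^1,\ldots,a^n)$ and $a^d$ happen to coincide with elements I need to delete.
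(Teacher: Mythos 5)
Your setup---induction on $s$, with base case the hypothesis, and the reduction to a witnessing column $(a^1,\ldots,a^n)\in B_j^n$ that uses all $s$ elements of $B_j$ and satisfies $f_j(a^1,\ldots,a^n)\neq a^d$---matches the paper's proof. But the central device you propose, passing to the restricted domain $X'=\{x\in X:\ x_j\neq q\}$, cannot produce the contradiction. Since the witnessing column uses \emph{every} element of $B_j$, for any choice of $q\in B_j$ at least one of the rows $a^i$ has $a^i_j=q$ and is therefore deleted from $X'$; hence $(a^1,\ldots,a^n)$ is no longer a tuple of rows of $X'$, and the restricted aggregator is simply never evaluated on the bad column. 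Every $j$-th column that does survive the restriction takes values in a subset of $X_j$ of cardinality at most $s-1$, where the inductive hypothesis already forces $f_j$ to act as $\mathrm{pr}^n_d$---so no contradiction arises, no matter how cleverly you choose which element to delete; choosing $q$ different from $f_j(a^1,\ldots,a^n)$ and from $a^d$ does not help, because $q$ still occurs among the entries $a^1,\ldots,a^n$. There is also a second, independent problem: the lemma's hypothesis and your inductive hypothesis are assertions about aggregators \emph{of $X$}; they do not automatically transfer to the new structure $X'$, which is a different set of feasible voting patterns and may admit aggregators unrelated to those of $X$.

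The idea that is missing is to reduce the \emph{arity} of the aggregator rather than the size of the domain. The paper picks positions $k_1,\ldots,k_s$ at which the witnessing column realizes its $s$ distinct values (arranged so that position $1$ and the position $i_0$ with $f_{j_0}(a^1_{j_0},\ldots,a^n_{j_0})=a^{i_0}_{j_0}$ are among $k_1,\ldots,k_{s-1}$), freezes the argument slots carrying the value $a^{k_s}_{j}$ to the constant row $a^{k_s}$, and routes $s-1$ free arguments into the remaining slots following the repetition pattern of the column. All substituted rows lie in $X$, so this yields an $(s-1)$-ary aggregator $\bar f^-$ of the \emph{same} $X$ which, on the $(s-1)$-element set $B^-_{j_0}$, returns the $i_0$-entry on the input of pairwise distinct values but the first entry on all inputs with fewer distinct values: a semi-projection that is not a projection, contradicting the inductive hypothesis applied to $\bar f^-$. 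This constant-substitution step (the ``Fact'' about semi-projections stated before the inductive step in the paper) is exactly what keeps the witness alive inside $X$ while lowering the parameter to which the induction applies; your retraction/restriction route cannot be patched to achieve the same effect.
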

\begin{proof}
The proof will be given by induction on $s$. The induction basis $s=2$ is given by hypothesis.	
Before delving into the inductive step of the proof  and for the purpose of making the intuition behind it clearer, let us mention the following fact whose proof is left to the reader. This fact illustrates the idea for obtaining a non-trivial aggregator of lower arity from one of higher arity.

\smallskip

{\bf Fact.} Let $A$ be a set and let $f: A^3  \mapsto  A$ be a supportive  function
such that if among $x_1, x_2, x_3$ at most two are different, then $f(x_1, x_2, x_3)=x_1$. Assume also that there exist pairwise distinct $a_1, a_2, a_3$ such that $f(a_1, a_2, a_3) = a_2$; in the terminology of universal algebra, f is a {\em semi-projection}, but not a projection. Define $g(x_1, x_2) = f(x_1, f(x_1, x_2, a_3), a_3)$. Then, by distinguishing cases as to the value of $f(x_1,x_2,a_3)$, it is easy to verify that $g$ is  supportive; however, $g$ is not a projection function because  $g(a_1, a_2) = a_2$,  whereas $g(a_1, a_3) = a_1$.

\smallskip

We continue with the inductive step of the proof of Lemma \ref{lem:technic}.    Assume that the claim holds for $s-1$. We may assume that $s \leq n$, lest the induction hypothesis applies.  Let $\bar{f}$ be an $n$-ary aggregator for $X$ and let $d$ be the integer obtained by applying the induction hypothesis on $s-1$. Assume, without loss of generality that $d=1$.  Then for every $j\leq m$ and for every  subset $B_j \subseteq X_j$ of cardinality at most $s-1$, we have that   $f_{j}{\restriction{B_j}} ={\rm pr}^n_1$, the $n$-ary projection function on $d=1$.
We will show that the same
holds for subsets $B_j\subseteq X_j $ of cardinality at most $s$.

Assume towards a contradiction that there exists an integer $j_0\leq m $  and row vectors ${a}^1, \ldots, {a}^n$ in $X$  such that the set
$B_{j_0}= \{a_{j_0}^1, \ldots, a_{j_0}^n\}$
 has  cardinality $s$ and
 \begin{equation} \label{i0-}f_{j_0}(a_{j_0}^1,  \ldots, a_{j_0}^n) \neq a_{j_0}^1.\end{equation}
 By  supportiveness, there exists $ i_0 \in \{2, ,\ldots, n\}$ such that
  \begin{equation}\label{i0}f_{j_0}(a_{j_0}^1,  \ldots, a_{j_0}^n) = a_{j_0}^{i_0}.\end{equation}
 Let $\{k_1, \ldots, k_s\}$ be a subset of $\{1, \ldots, n\}$ of  maximum cardinality  such that the $a_{j_0}^{k_1}, \ldots, a_{j_0}^{k_s}$ are pairwise distinct (the maximum such cardinality could not be less than $s$ because of the induction hypothesis).   Obviously, if $i \not \in \{k_1, \ldots, k_s\}$, then there exists
$l \in \{1, \ldots, s\}$ such that $a_{j_0}^i = a_{j_0}^{k_l}$. So, we may assume that the $i_0$ in equation \eqref{i0} above belongs to $\{k_1, \ldots, k_s\}$ (and is different from 1, because by equation \eqref{i0-}, $a_{j_0}^{i_0} \neq a_{j_0}^1$). Since $s\geq 3$ there is an element in $\{k_1, \ldots, k_s\}$ different from both $1$ and $i_0$. Assume, without loss of generality, that this element is $k_s$. Let $B_{j_0}^- = \{a_{j_0}^{k_1}, \ldots, a_{j_0}^{k_{s-1}}\}$. We define an $(s-1)$-ary aggregator $\scaledbar{f}^- =(f_1^-, \ldots, f_m^-)$ as follows:
\[f_j^- (x^1_j, \ldots, x^{s-1}_j) = f_j(y^1_j, \ldots, y^n_j), \  j =1, \ldots, m,\]
where,
\begin{equation}
y^i_j = \begin{cases}x^l_j & \text{ if }  i= k_l \text{ for some  } l= 1, \ldots, {s-1},\\ x^l_j & \text{ if } i \not\in \{k_1, \ldots, k_s\} \text{ and } a_{j_0}^i = a_{j_0}^{k_l}\text{ for some  } l= 1, \ldots, {s-1},\\
a^{k_s}_j  & \text{ if } i=k_s, \\ a^{k_s}_j  &  \text { if } i \not\in \{k_1, \ldots, k_s\} \text{ and } a_{j_0}^i = a_{j_0}^{k_s}.
 	\end{cases}
	\end{equation}
Intuitively, to compute $f_j^- (x^1_j, \ldots, x^{s-1}_j)$, we put $x^1_j, \ldots, x^{s-1}_j$ as arguments of $f_j$  at the places $\{k^1,\ldots, k^{s-1}\}$,  we  put  $a^{k_s}_j$ as argument of $f_j$ at the place  $k_s$, and finally, as arguments of $f_j$ at places not in $\{k^1,\ldots, k^s\}$, we put either a copy of one $\{x^1, \ldots, x^{s-1}\}$ or a copy of $a^{k_s}$ following the pattern by which in the vector $a^1_{j_0}, \ldots, a^n_{j_0}$ the coordinates $a^{k_1}_{j_0}, \ldots, a^{k_s}_{j_0}$ repeat themselves in the places not in $\{k^1,\ldots, k^s\}$.

First observe that $\scaledbar{f}^-$ is supportive. Indeed this follows from the observation that $f_j^-$  can never take the value $a_j^{k_s}$.
Then observe that $\scaledbar{f}^- =(f_1^-, \ldots, f_m^-)$ is  an aggregator on $X$, because all  row vectors $y^1, \ldots, y^n$ defined above  belong to $X$ (each  is either some $x^i$ or some $a^i$).

 It is obvious that \[f_{j_0}^-(a_{j_0}^{k_1},  \ldots, a_{j_0}^{k_{s-1}}) = f_{j_0}(a_{j_0}^1, \ldots, a_{j_0}^n) = a_{j_0}^{i_0}.\]
Also, by the inductive hypothesis, if not all $x^1_{j_0},\ldots,  x^{s-1}_{j_0}$ are equal then \[f^-_{j_0} (x^1_{j_0}, \ldots, x^{s-1}_{j_0}) = x^1_{j_0}.\] Therefore, $f_{j_0}^-{\restriction{B_{j_0}^-}} \neq {\rm pr}^{s-1}_1$, which is contradiction that concludes the proof of Lemma \ref{lem:technic}. \end{proof}

Next, we bring into the picture some basic concepts and results from universal algebra; we refer the reader
to Szendrei's monograph  \cite{szendrei1986clones}) for additional information and background.
 A {\em clone}  on a finite  set $A$ is  a set $\mathcal C$ of finitary operations on $A$ (i.e., functions from a power of $A$ to $A$) such that $\mathcal C$  contains all projection functions and is closed under arbitrary compositions (superpositions).  The proof of the next lemma is straightforward, and we omit it.

\begin{lemma}\label{lem:clone} Let $X$ be a set of feasible voting patterns. For every $j$ with $1\leq j\leq m$ and every subset
$B_j\subseteq X_j$, the set ${\mathcal C}_{B_j}$ of the restrictions
$f_j{\restriction{B_j}}$ of the $j$-th components of aggregators $\bar{f} = (f_1, \ldots, f_m)$ for $X$ is a clone on $B_j$.
\end{lemma}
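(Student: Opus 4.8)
The plan is to check the two defining properties of a clone for ${\mathcal C}_{B_j}$ directly from the definitions of ``aggregator'' and ``supportive''. The first property, that ${\mathcal C}_{B_j}$ contains the projections, is essentially immediate: for every $n\geq 2$ and every $d\leq n$ the dictatorial $m$-tuple $({\rm pr}^n_d,\ldots,{\rm pr}^n_d)$ is an aggregator for $X$ (it is trivially supportive and preserves $X$), and its $j$-th component restricted to $B_j$ is the $n$-ary projection ${\rm pr}^n_d$ on $B_j$; so all projections of arity at least $2$ lie in ${\mathcal C}_{B_j}$, and the unary identity one includes by the usual convention (for arity $\geq 2$ the projections genuinely occur as components of dictatorial aggregators, and adjoining $\mathrm{id}$ produces nothing new under composition). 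Thus the only point that needs a real argument is closure under (super)position.

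So suppose $F=f_j{\restriction{B_j}}\in{\mathcal C}_{B_j}$ is $n$-ary, arising from an $n$-ary aggregator $\bar f=(f_1,\ldots,f_m)$ for $X$, and $G_1,\ldots,G_n\in{\mathcal C}_{B_j}$ are $k$-ary with $G_i=g^{(i)}_j{\restriction{B_j}}$ for $k$-ary aggregators $\bar g^{(i)}=(g^{(i)}_1,\ldots,g^{(i)}_m)$, $i=1,\ldots,n$ --- note that these $n$ inner aggregators need not coincide. I would then form the coordinatewise composite $m$-tuple $\bar h=(h_1,\ldots,h_m)$ of $k$-ary functions given by
\[ h_l(x^1_l,\ldots,x^k_l)=f_l\bigl(g^{(1)}_l(x^1_l,\ldots,x^k_l),\ldots,g^{(n)}_l(x^1_l,\ldots,x^k_l)\bigr),\quad l=1,\ldots,m, \]
and claim that $\bar h$ is an aggregator for $X$ whose $j$-th component, restricted to $B_j$, equals the superposition $F(G_1,\ldots,G_n)$; this establishes closure.

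Three routine verifications remain. \emph{Supportiveness of $\bar h$:} fixing $l$ and a column $(x^1_l,\ldots,x^k_l)\in A_l^k$, supportiveness of each $\bar g^{(i)}$ puts every $g^{(i)}_l(x^1_l,\ldots,x^k_l)$ in $\{x^1_l,\ldots,x^k_l\}$, and then supportiveness of $\bar f$ puts $h_l(x^1_l,\ldots,x^k_l)$ in $\{g^{(1)}_l(\cdots),\ldots,g^{(n)}_l(\cdots)\}\subseteq\{x^1_l,\ldots,x^k_l\}$. \emph{Preservation of $X$:} given rows $x^1,\ldots,x^k\in X$, apply each $\bar g^{(i)}$ to the $k\times m$ matrix with these rows to obtain a row $y^i=(g^{(i)}_1(x_1),\ldots,g^{(i)}_m(x_m))\in X$; then $(y^1,\ldots,y^n)$ is an $n\times m$ matrix over $X$, and applying $\bar f$ to it yields $(h_1(x_1),\ldots,h_m(x_m))\in X$. \emph{Identifying the restriction:} for arguments in $B_j$, supportiveness of each $\bar g^{(i)}$ forces $g^{(i)}_j(x^1,\ldots,x^k)\in\{x^1,\ldots,x^k\}\subseteq B_j$, so $g^{(i)}_j(x^1,\ldots,x^k)=G_i(x^1,\ldots,x^k)$; since all these values lie in $B_j$, applying $f_j$ to them agrees with applying $F=f_j{\restriction{B_j}}$, so $h_j{\restriction{B_j}}=F(G_1,\ldots,G_n)$ (when $k=1$ the composite is unary and, by supportiveness, equal to $\mathrm{id}$ on $B_j$, consistent with the convention above). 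I do not expect a genuine obstacle --- this is bookkeeping, as the paper says --- but the two things to keep in mind are that the $n$ inner functions may come from $n$ distinct aggregators, so the composite must substitute the matching component $g^{(i)}_l$ in each coordinate $l$, and that supportiveness is what both keeps intermediate values inside $B_j$ (so the restrictions compose) and guarantees $f_j$ is only ever invoked on $B_j^n$, where $f_j{\restriction{B_j}}$ is defined.
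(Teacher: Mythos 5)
Your proof is correct; the paper omits the proof as ``straightforward,'' and your argument is exactly the intended one: projections come from dictatorial aggregators, and closure under superposition is the coordinatewise composition of aggregators (the same mechanism the paper later formalizes as Lemma~\ref{lem:superposition}), with supportiveness ensuring the intermediate values stay in $B_j$ so that restriction commutes with composition. No gaps.
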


Post \cite{post1941two} classified all clones on a two-element set (for more recent expositions of Post's pioneering results, see, e.g.,
\cite{szendrei1986clones} or  \cite{pelletier1990post}). One of Post's main findings is that if $\mathcal C$ is
  a clone of conservative functions on a two-element set, then either $\mathcal C$ contains  only projection functions or $\mathcal C$ contains one of the following operations: the binary operation $\land$, the binary operation $\lor$, the ternary operation $\oplus$, the ternary operation  ${\rm maj}$.

We now restate  Theorem \ref{basicthm} and prove it.

\setcounter{theorem}{0}

\begin{theorem}\label{basicthm} Let $X$ be a set of feasible voting patterns. Then the following statements are equivalent.
\begin{enumerate}
\item  $X$ is a possibility domain.
 \item $X$  admits a majority aggregator or it admits a minority aggregator or it has a  	non-trivial (non-dictatorial) binary aggregator.
     \end{enumerate}
\end{theorem}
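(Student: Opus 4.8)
The plan is to prove the nontrivial direction $1 \Longrightarrow 2$ by contraposition, using the machinery assembled above. So assume $X$ has \emph{no} non-trivial binary aggregator and \emph{no} majority aggregator and \emph{no} minority aggregator; we must show every $n$-ary aggregator of $X$ is dictatorial, i.e.\ $X$ is an impossibility domain. By Lemma~\ref{univlem}, the assumption that all binary aggregators are trivial forces every $n$-ary aggregator $\bar f = (f_1,\ldots,f_m)$ to be \emph{locally monomorphic}: on every two-element $B_j \subseteq X_j$ the restriction $f_j\restriction_{B_j}$ is, up to renaming $\{0,1\}$, the same Boolean function, call it $\mathfrak{f}_n$, independent of $j$ and of $B_j$.

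Next I would pin down what this common Boolean function can be. By Lemma~\ref{lem:clone}, for each $j$ and each two-element $B_j$ the set of restrictions $f_j\restriction_{B_j}$ (over all aggregators $\bar f$ of all arities) is a clone of conservative functions on a two-element set, and by Post's classification such a clone either consists only of projections or contains one of $\land,\lor,\oplus,{\rm maj}$. Now observe: if $\land$ (or $\lor$) ever appears as some $f_j\restriction_{B_j}$, then by local monomorphicity of that binary aggregator and Post's description one gets a binary aggregator that is non-dictatorial (since $\land$ is not a projection even up to swapping $0/1$), contradicting our standing assumption. If $\oplus$ appears, then by local monomorphicity the corresponding ternary aggregator has $f_j\restriction_{B_j} = \oplus$ on \emph{every} two-element subset of \emph{every} $X_j$, i.e.\ $X$ admits a minority aggregator — contradiction. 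Likewise ${\rm maj}$ appearing yields a majority aggregator — contradiction. Hence for every $j$ and every two-element $B_j$, the clone ${\mathcal C}_{B_j}$ consists of projections only; in particular for every aggregator $\bar f$ of every arity $n$ and every two-element $B_j\subseteq X_j$, $f_j\restriction_{B_j}$ is an $n$-ary projection ${\rm pr}^n_{d}$, and by local monomorphicity the index $d$ is the \emph{same} $d = d(\bar f)$ for all $j$ and all $B_j$.

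At this point we know each aggregator behaves like a single fixed dictator $d$ on every pair of values in every coordinate, and it remains to upgrade this from pairs to all of $X_j$, i.e.\ to conclude $f_j\restriction_{X_j} = {\rm pr}^n_d$ for every $j$, which is exactly dictatoriality on $X$. This is precisely the content of Lemma~\ref{lem:technic}: its hypothesis (for every arity $n$ and every aggregator there is $d\le n$ with $f_j\restriction_{B_j} = {\rm pr}^n_d$ for all $j$ and all two-element $B_j$) is exactly what we just established, and its conclusion (the same $d$ works for all subsets $B_j$ of every bounded cardinality $s$, hence, taking $s = \max_j |X_j|$, for $B_j = X_j$ itself) gives that $\bar f$ restricted to $X$ equals $({\rm pr}^n_d,\ldots,{\rm pr}^n_d)$ restricted to $X$. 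Thus every aggregator of $X$ is trivial, so $X$ is an impossibility domain, completing the contrapositive.

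The main obstacle is the induction on the size $s$ of the value-subset in Lemma~\ref{lem:technic} — moving from ``dictator on every pair'' to ``dictator on all of $X_j$'' is where the real work lies, because one must rule out the possibility that $f_{j_0}$ acts like a \emph{semi-projection} (projecting on most pairs but not globally); the trick, foreshadowed in the Fact, is to collapse repeated coordinates to build a lower-arity aggregator that witnesses a non-projection restriction, contradicting the induction hypothesis. Everything else is bookkeeping: the case analysis over $\{\land,\lor,\oplus,{\rm maj}\}$ via Post's theorem is immediate once local monomorphicity is in hand, and Lemma~\ref{univlem} has already been proved above. I would present the argument in the order: (i) invoke Lemma~\ref{univlem}; (ii) run the Post/clone dichotomy to force all binary-restriction clones to be trivial, using non-existence of binary non-dictatorial, majority, and minority aggregators; (iii) invoke Lemma~\ref{lem:technic} to pass from pairs to full projections; (iv) conclude.
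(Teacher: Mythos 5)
Your proof is correct and follows essentially the same route as the paper's: contraposition, Lemma~\ref{univlem} to get local monomorphicity, the clone Lemma~\ref{lem:clone} plus Post's classification to rule out $\land,\lor,{\rm maj},\oplus$ and force all two-element restrictions to be a common projection ${\rm pr}^n_d$, and finally Lemma~\ref{lem:technic} with $s=\max_j|X_j|$ to upgrade from pairs to all of $X_j$. No gaps.
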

\begin{proof}
  As stated earlier, only the direction $1 \Longrightarrow 2$ requires proof. In the contrapositive, we will only prove that if $X$ does not admit a majority, neither a minority, nor a  non-trivial binary aggregator, then $X$ does not have an $n$-ary non-trivial aggregator for any $n$.
Towards this goal, and assuming that  $X$ is as stated, we will first show that the hypothesis of Lemma \ref{lem:technic} holds. Then the required  follows from the conclusion of Lemma \ref{lem:technic} by taking $s= \max\{|X_j|: 1\leq j\leq m\}$.
\smallskip

Given $j\leq m$ and a two-element subset $B_j\subseteq X_j$, consider the clone ${\mathcal C}_{B_j}$. If ${\mathcal C}_{B_j}$  contained one of the the binary operations $\land$ or  $\lor$ then $X$ would have a binary non-trivial aggregator, a contradiction. If, on the other hand, ${\mathcal C}_{B_j}$ contained the ternary operation $\oplus$ or the ternary operation $\text{maj}$, then,  by Lemma \ref{univlem}, $X$ would admit a minority or a majority aggregator, a contradiction as well.  So, by the aforementioned Post's result, all elements of
${\mathcal C}_{B_j}$, no matter what their arity is,  are projection functions. By Lemma \ref{univlem} again, since $X$ has no binary non-trivial aggregator, we have that for every $n$ and for every $n$-ary aggregator $\bar{f} =(f_1, \ldots, f_m)$, there exists an integer $d \leq  n$ such that for every $j \leq m$ and every  two-element set $B_j \subseteq X_j $, the restriction $f_j{\restriction{B_j}}$ is equal to ${\rm pr}^n_d$, the $n$-ary projection on the $d$-th coordinate. This concludes the proof of  Theorem \ref{basicthm}.
\end{proof}

We conclude this section by restating Theorem \ref{Bbasicthm} and proving it; as mentioned earlier, this result is implicit in \cite{dokow2010aggregation}. Recall that in the Boolean framework, $X$ admits a majority (minority)  aggregator iff a  logical relation $X$ is bijunctive (affine). See Schaefer \cite{schaefer1978complexity} for definitions.

\begin{theorem}[Dokow and Holzman]\label{Bbasicthm}
Let $X$ be a set of feasible voting patterns in the Boolean framework. Then the following statements are equivalent.
\begin{enumerate}
  \item $X$ is a possibility domain.
  \item $X$ is  affine  or $X$  has a  	non-trivial (non-dictatorial) binary aggregator.	
\end{enumerate}
\end{theorem}

\begin{proof}  Only the direction $1 \Longrightarrow 2$ requires proof.
Assume that $X$ is a possibility domain in the Boolean framework. By Theorem \ref{basicthm}, $X$ admits either a majority or a minority aggregator  or $X$ has non-trivial binary aggregator. Since we are in the Boolean framework, this means that $X$ is affine or $X$ is bijunctive or $X$ has a non-trivial binary aggregator. If $X$ has at most two elements, then $X$ is closed under $\oplus$, hence $X$ is affine. So, it suffices to show that if $X$ is bijunctive and has at least three elements, then $X$ has a non-trivial binary aggregator. In turn, this follows immediately from the following claim.

\smallskip

\begin{claim} \label{bijunctive-claim}
Let $X$ be a bijunctive relation on $\{0,1\}$ with at least three elements.
  If $X$ is not degenerate (i.e., every $X_j$ has at least two elements), 
then  $X$ has a binary non-monomorphic aggregator.	
\end{claim}

To prove Claim \ref{bijunctive-claim},  fix an element $\bar{a} = (a_1, \ldots, a_m) \in X$. Define the following binary aggregator, where $\bar{x} =
(x_1, \ldots, x_m)$ and $\bar{y} = (y_1, \ldots, y_m)$ are arbitrary elements of $X$:
\[ {\bar{f}}^{\scaledbar{a}}(\bar{x},\bar{y} ) = ({\rm maj}(x_1, y_1, a_1), \ldots, {\rm maj} (x_m, y_m, a_m)).
\]

First, observe that $\bar{f}^{\bar{a}}$ is indeed an aggregator for $X$.
Since 
$X$ is closed under
${\rm maj}$, all we have to prove is that $\bar{f}^{\bar{a}}$ is supportive. But this is obvious, because, for $j\leq m$, if $x_j = a_j$ or $y_j=a_j$, then ${\rm maj}(x_j, y_j, a_j)= x_j$ or ${\rm maj}(x_j, y_j, a_j)= y_j$. If $x_j\not =  a_j$ and $y_j \not = a_j$, then $x_j = y_j$, hence
 ${\rm maj}(x_j, y_j, a_j) = x_j=y_j.$

Now assuming that $X$ contains more than two elements and is not degenerate, we will show    that there exists a row vector $\bar{a} = (a_1, \ldots, a_m) \in X$
such that $\bar{f}^{\scaledbar{a}}$ is not monomorphic, i.e., there are distinct  $i,j=1, \ldots,m$ such that $\bar{f}^{\bar{a}}_i \neq \bar{f}^{\bar{a}}_j$, and thus the proof of the claim will be concluded.

Observe first that if for all distinct   $i\leq m$ and $j \leq m$ one of the following (depending on $i,j$) were true:
\begin{itemize}
\item for all vectors $\bar{u} \in X$, we have that $u_i = u_j$ or 
\item for all vectors $\bar{u} \in X$, we have that $u_i \neq  u_j$, 
 \end{itemize}
then it would follow  that there exist  only two elements in $X$ which at every coordinate have complementary values, contradicting the hypothesis that $X$ contains more than two elements. Therefore, there exist two distinct integers $i\leq m$ and $j\leq  m$ for which there are two elements  $\bar{u},\bar{v} \in X$ such that $u_i \neq u_j$ and $v_i = v_j$. Combining the last statement with the non-degeneracy of $X$, we conclude,   by an easy case analysis,  that there exist three elements $\bar{u},\bar{v}, \bar{w} \in X$ such that at least  one of the following four cases holds:
\begin{enumerate}[label =(\roman*)]
\item the $i$-th and $j$-th coordinates of $\bar{u},\bar{v}, \bar{w}$ are $(1,0),(0,1),(1,1)$, respectively, \label{casei}
\item  the $i$-th and $j$-th coordinates of $\bar{u},\bar{v}, \bar{w}$ are $(1,0),(0,1),(0,0)$, respectively, \label{caseii}
\item the $i$-th and $j$-th coordinates of $\bar{u},\bar{v}, \bar{w}$ are $(1,0),(1,1),(0,0)$, respectively, \label{caseiii}
\item the $i$-th and $j$-th coordinates of $\bar{u},\bar{v}, \bar{w}$ are $(1,0),(0,1),(0,0)$, respectively. \label{caseiv}
\end{enumerate}
In cases \ref{casei} and \ref{caseii}, by computing the $i$-th and $j$-th coordinates of ${\bar{f}}^{\bar{u}}(\bar{u},\bar{v} )$ and ${\bar{f}}^{\bar{u}}(\bar{v},\bar{u} )$, we conclude that ${\bar{f}}^{\bar{u}}_i = \lor$ and ${\bar{f}}^{\bar{u}}_j = \land $, so ${\bar{f}}^{\bar{u}}_i \neq {\bar{f}}^{\bar{u}}_j $.
In case \ref{caseiii}, by computing  the $i$-th and $j$-th coordinates of ${\bar{f}}^{\bar{u}}(\bar{v},\bar{w} )$, we conclude that  ${\bar{f}}^{\bar{u}}_i \neq {\bar{f}}^{\bar{u}}_j $. Case \ref{caseiv} is similar. This completes the proof of Claim \ref{bijunctive-claim}  and  of  Theorem \ref{Bbasicthm}. 
\end{proof}

\section{Total Blockedness}\label{totbl}
In this section,  we will follow more closely the notation in \cite{dokow2010aggregationnonB}. Let $X$ be a set of feasible voting patterns.

Given subsets  $B_j \subseteq X_j, j=1, \ldots, m$,
the product $B = \prod_{j=1}^m B_j $ is called a {\em sub-box}. It is called a {\em $2$-sub-box} if $|B_j| =2$ for all $j$.

Elements of a box $B$ that belong also to $X$  will  be called {\em feasible evaluations within $B$} (in the sense that each issue $j=1,
 \ldots, m$ is ``evaluated" within $B$). If $ K \subseteq  \{1, \ldots, m \}$, a vector
 $ x \in \prod_{j \in K}B_j$ is called a {\em partial feasible  evaluation within $B$} if there exists a feasible $y\in B$ that extends $x$, i.e. $x_j = y_j, \forall j \in K$, whereas $x$  is called an infeasible within $B$ partial evaluation otherwise; finally, $x$  is called a {\em $B$-Minimal Infeasible  Partial Evaluation} ($B$-MIPE) if $x$ is a  infeasible  within $B$ partial evaluation and if for every $j \in K$,  there is a $b_j \in B_j$ so that changing the  $j$-th  coordinate of $x$ to $b_j$ results in a feasible  partial evaluation within $B$.

 We define a directed graph  $G_X$ whose vertices correspond to all pairs of {\em distinct} elements $u,u'$ in $X_j$ for all $j =1, \ldots m$  and are to be denoted by $uu'_j$. Given two vertices $uu'_k, vv'_l$ with $k \neq l$, we connect them by a directed edge from  $uu'_j$ towards $vv'_l$ if there exists a 2-sub-box $B = \prod_{j=1}^m B_j $, a $K \subseteq \{1, \ldots, m\}$ and a $B$-MIPE $x = (x_j)_{j\in K}$ so that $k,l \in K$ and $B_k = \{u,u'\}$ and $ B_l =\{v,v'\}$ and $x_k =u$ and $x_l = v'$. We denote such a directed edge by $uu'_k \underset{B,x,K}{\longrightarrow} vv'_l$ (or just $uu'_k \rightarrow vv'_l$ in case $B,x, K$ are understood from the context).

 We say that $X$ is {\em totally blocked} if $G_X$ is strongly connected, i.e., for every two distinct vertices $uu'_k, vv'_l$  are connected by a directed path (this must hold even if $k=l$). This definition, given 
  by Dokow and Holzman \cite{dokow2010aggregationnonB},
  is  a generalization  to the case where $A_j$ are allowed to have arbitrary cardinalities of a corresponding definition for the Boolean framework (all $A_j$ of cardinality 2), originally given in \cite{nehring2002stategy}.

 The length of a $B$-MIPE $x = (x_j)_{j \in K}$ is $|K|$. We say that $X$ is {\em multiply constrained} (definition by Dokow and Holzman \cite{dokow2010aggregationnonB}) if there exists a sub-box $B$ for which there exists a $B$-MIPE of length at least 3.

 Dokow and Holzman \cite{dokow2010aggregationnonB} have established the following two results.

 \begin{customthm}{A}[Dokow and Holzman \cite{dokow2010aggregationnonB}] 	
 If $X$ is totally blocked and multiply constrained then $X$ is an impossibility domain.
 \end{customthm}
 
 \begin{customthm}{B}[Dokow and Holzman \cite{dokow2010aggregationnonB}]\label{D-H-B}
If $X$ is not totally blocked, then $X$ is a possibility domain; in fact for every $n\geq 2$ there is a non-trivial $n$-ary  aggregator.
  \end{customthm}

 Quite recently, Szegedy and Xu \cite{szegedy2015impossibility} provided a sufficient and necessary condition for $X$ to be an impossibility domain. Their result is as follows.
 
 \begin{customthm}{C}[Szegedy and Xu \cite{szegedy2015impossibility}] If $X$ is totally blocked  then  $X$ is an impossibility domain if and only if $X$  contains no binary and no   ternary non-trivial aggregator.
 	\end{customthm}
Observe that  latter two theorems obviously imply Corollary \ref{cor:3_agg}, however this was not previously explicitly stated.

We now prove that:  

\begin{theorem}\label{thm:tot-block}
Let $X$ be a set of feasible voting patterns. Then the following statements are equivalent.
\begin{enumerate}
  \item $X$ is totally blocked.
  \item $X$ has no  non-trivial (non-dictatorial) binary aggregator.
\end{enumerate}
  \end{theorem}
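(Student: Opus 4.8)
The plan is to prove the two implications separately. The direction $2\Rightarrow 1$ is immediate: it is exactly the contrapositive of Theorem~\ref{D-H-B} of Dokow and Holzman, since if $X$ is not totally blocked then $X$ admits a non-trivial $n$-ary aggregator for every $n\ge 2$, in particular a non-trivial binary one. So all the work lies in $1\Rightarrow 2$: assuming $X$ is totally blocked, I must show that every binary aggregator $\bar f=(f_1,\dots,f_m)$ of $X$ is dictatorial. The first reduction is to encode $\bar f$ by a $\{0,1\}$-labelling of the vertices of $G_X$: for an issue $j$ and a vertex $uu'_j$ (an ordered pair of distinct elements of $X_j$) set $\delta(uu'_j)=1$ if $f_j(u,u')=u$ and $\delta(uu'_j)=0$ if $f_j(u,u')=u'$ — by supportiveness exactly one case occurs. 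One checks at once that if $\delta$ is constant then $\bar f$ is dictatorial: $\delta\equiv 1$ forces $f_j(u,u')=u$ for all $j$ and all $u\ne u'$ in $X_j$, hence $\bar f\restriction X = ({\rm pr}^2_1,\dots,{\rm pr}^2_1)\restriction X$, and symmetrically $\delta\equiv 0$ gives $\bar f\restriction X = ({\rm pr}^2_2,\dots,{\rm pr}^2_2)\restriction X$. (This also disposes, uniformly, of the case in which some restriction $f_j\restriction B_j$ is a binary semilattice operation rather than a projection, since that makes $\delta$ disagree on the two orderings of $B_j$.) Hence it suffices to prove that total blockedness forces $\delta$ to be constant.

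The core of the argument is a propagation claim along the edges of $G_X$, obtained by pushing a minimal infeasible partial evaluation through the aggregator. \emph{Claim:} for every edge $uu'_k\underset{B,x,K}{\longrightarrow}vv'_l$ of $G_X$ one has $\delta(v'v_l)\Rightarrow\delta(u'u_k)$ — note that both ordered pairs get reversed. To prove it I use the witnessing data: a $2$-sub-box $B$ with $B_k=\{u,u'\}$, $B_l=\{v,v'\}$, a set $K\ni k,l$, and a $B$-MIPE $x=(x_j)_{j\in K}$ with $x_k=u$, $x_l=v'$. Since $x$ is $B$-minimal and infeasible, changing its $k$-th coordinate to $u'$ produces a feasible partial evaluation within $B$, which extends to some $y\in X\cap B$; likewise changing the $l$-th coordinate of $x$ to $v$ extends to some $y'\in X\cap B$. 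Apply $\bar f$ coordinatewise to $(y,y')$ to get $z$; being the value of an aggregator and, coordinatewise, supportive, $z$ lies in $X\cap B$. On $K$, the vectors $y$ and $y'$ both equal $x$ except that $y_k=u'$ and $y'_l=v$; so supportiveness gives $z_j=x_j$ for $j\in K\setminus\{k,l\}$, while $z_k=f_k(u',u)$ and $z_l=f_l(v',v)$. Because $z$ extends its own restriction to $K$ and $x$ is infeasible, we cannot have $z\restriction K = x\restriction K$; hence $z_k\ne u$ or $z_l\ne v'$, that is $f_k(u',u)=u'$ or $f_l(v',v)=v$, that is $\delta(u'u_k)$ or $\neg\delta(v'v_l)$, which is the claim.

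It remains to combine the claim with strong connectivity. Let $\rho$ be the involution $uu'_j\mapsto u'u_j$ of the vertex set of $G_X$, and put $\mu:=\delta\circ\rho$. The claim says exactly that every edge $e\to e'$ of $G_X$ yields $\mu(e')\Rightarrow\mu(e)$. Since $X$ is totally blocked, $G_X$ is strongly connected, so for any two vertices $a,b$ there is a directed path $a=c_0\to c_1\to\cdots\to c_t=b$; chaining the implications along it gives $\mu(b)\Rightarrow\mu(a)$, and reversing the roles of $a$ and $b$ gives the converse, so $\mu$ — and therefore $\delta$, as $\rho$ is a bijection — is constant. By the first reduction $\bar f$ is dictatorial, and since $\bar f$ was an arbitrary binary aggregator, $X$ has no non-trivial binary aggregator, completing $1\Rightarrow 2$.

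The step I expect to be the main obstacle is the propagation claim: one must track carefully which of $y,y'$ has coordinate $k$ (respectively $l$) modified, hence which element of $B_k$ and of $B_l$ occupies which argument slot of $f_k$ and $f_l$, and then match this against the asymmetric convention in the definition of the edges of $G_X$, where the source vertex supplies the first listed value $x_k=u$ but the target supplies the second listed value $x_l=v'$. That asymmetry is precisely what makes the reversal $\rho$ appear in the last step. A smaller but necessary point is the observation that the coordinatewise image $z$ genuinely lies in $X\cap B$, so that $z\restriction K = x\restriction K$ really would contradict the infeasibility of the MIPE $x$.
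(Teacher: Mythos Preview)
Your proposal is correct. For $1\Rightarrow 2$ your argument is essentially the paper's: both prove a propagation claim along edges of $G_X$ by feeding the two feasible extensions obtained by flipping a single coordinate of the $B$-MIPE into $\bar f$ and invoking infeasibility of $x$; the only cosmetic difference is that you apply $\bar f$ to $(y,y')$ and obtain the backward implication $\delta(v'v_l)\Rightarrow\delta(u'u_k)$ (whence the involution $\rho$ in your final step), whereas the paper in effect feeds the pair in the opposite order and gets the direct forward implication, in your notation $\delta(uu'_k)\Rightarrow\delta(vv'_l)$, from which strong connectivity immediately gives constancy of $\delta$.

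For $2\Rightarrow 1$ the paper does not merely invoke Theorem~\ref{D-H-B} but gives a self-contained construction: from a partition $V_1\cup V_2$ of the vertices of $G_X$ with no edge from $V_1$ to $V_2$ (which exists precisely when $G_X$ is not strongly connected), it defines $f_k(u,u')=u$ when $uu'_k\in V_1$ and $f_k(u,u')=u'$ when $uu'_k\in V_2$, and verifies directly that this $\bar f$ is a non-trivial binary aggregator for $X$. Your citation of Theorem~\ref{D-H-B} is perfectly legitimate since that theorem is stated in the paper, but the paper's explicit construction has the advantage of making Theorem~\ref{thm:tot-block} stand independently of the Dokow--Holzman result, and in fact is essentially the $n=2$ case of the proof of Theorem~\ref{D-H-B}.
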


 Direction $2 \Longrightarrow 1$ 
 is contained in Dokow and Holzman \cite[Theorem 2]{dokow2010aggregationnonB} (Theorem~\ref{D-H-B} above);  for completeness, we give an independent proof of both directions.
	
\begin{proof} We start with 
direction $1 \Longrightarrow 2$.
Consider at first  two vertices $uu'_k, vv'_l$ of $G_X$ (with $k \neq l$) connected by an edge $uu'_k\rightarrow vv'_l$. Then there exists a 2-sub-box $B = \prod_{j=1}^m B_j$ with $B_k = \{u, u'\}$ and $B_l = \{v, v'\}$ and a $B$-MIPE $x= (x_j)_{j \in K}$ such that  $\{k,l\} \subseteq K$  and $x_k =u, x_l=v'$.

\begin{claim} \label{last-claim} For every binary aggregator
$\bar{f} = (f_1, \ldots, f_m)$ of  $X$, if $f_k(u,u') =u$ then $f_l(v,v') =v$. \end{claim}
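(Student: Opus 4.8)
The plan is to unwind the definitions of $B$-MIPE and aggregator to force the claimed implication by a direct counting/substitution argument. Suppose $\bar f=(f_1,\dots,f_m)$ is a binary aggregator for $X$, and suppose $f_k(u,u')=u$ but, towards a contradiction, $f_l(v,v')\neq v$, i.e.\ $f_l(v,v')=v'$ (supportiveness leaves only these two options). I will use the two ``witness'' feasible evaluations guaranteed by the fact that $x=(x_j)_{j\in K}$ is a $B$-MIPE: since $x$ is infeasible within $B$ but changing coordinate $k$ repairs it, there is a feasible $y\in B\cap X$ agreeing with $x$ on $K\setminus\{k\}$ and having $y_k=u'$ (the ``other'' element of $B_k$); similarly, changing coordinate $l$ repairs $x$, so there is a feasible $z\in B\cap X$ agreeing with $x$ on $K\setminus\{l\}$, with $z_l=v$ (the other element of $B_l$).

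Now apply $\bar f$ to the pair of rows $(y,z)\in X^2$ (or possibly $(z,y)$; I'd pick the order so the arguments of $f_k$ are $(u',u)$-ish and of $f_l$ are $(v',v)$-ish — one has to be careful because the values on $K$ need to come out right). The key point: on every coordinate $j\in K\setminus\{k,l\}$, $y$ and $z$ both equal $x_j$, so supportiveness forces $f_j$ to return $x_j$ there; on coordinate $k$, the arguments are $u$ (from $x_k$, wherever $z$ sits) and $u'$ (from $y$), and by the locally-monomorphic/flip behaviour together with $f_k(u,u')=u$ we get the output $u=x_k$; on coordinate $l$, the arguments are $v'$ (from $x_l=z$'s partner) and $v$ (from $z$), and the assumption $f_l(v,v')=v'$ forces the output to be $v'=x_l$. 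Hence $\bar f$ applied to these two feasible rows produces a row whose restriction to $K$ is exactly $x$, which extends to… wait — it produces an element of $X$ whose projection to $K$ equals $x$, contradicting infeasibility of $x$ within $B$ (all outputs lie in the relevant $B_j$ since $f$ is supportive and the inputs are in $B$). This contradiction gives $f_l(v,v')=v$, proving Claim~\ref{last-claim}.

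I would then note that Claim~\ref{last-claim} is the ``edge-propagation'' statement: along any directed edge of $G_X$ the behaviour of a binary aggregator on one pair of values is forced by its behaviour on the other. The plan for the theorem is to iterate this along directed paths: if $X$ is totally blocked, $G_X$ is strongly connected, so for any two vertices $uu'_k$ and any vertex $pp'_j$ there is a directed path, and Claim~\ref{last-claim} (applied edge by edge, choosing the labellings consistently) shows that the value of $f_k$ on a single pair $(u,u')$ determines $f_j$ on every pair $(p,p')$ for every $j$. Taking $k=j$ and using the strong connectivity forces $f_k$ restricted to $\{u,u'\}$ to be a projection (both choices $f_k(u,u')=u$ and $f_k(u,u')=u'$ are consistent but each propagates to a single global projection index $d$), and one checks the propagated index $d$ is the same for all $k$; hence $\bar f$ is dictatorial, i.e.\ $X$ has no non-trivial binary aggregator. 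Conversely ($2\Rightarrow1$), I would argue the contrapositive exactly as Dokow and Holzman do (Theorem~\ref{D-H-B}): if $X$ is not totally blocked, $G_X$ has a ``source side'' — a nonempty proper subset $S$ of vertices with no edges leaving $S$ — and one builds a non-trivial binary aggregator by setting $f_j{\restriction}\{u,u'\}$ to be $\mathrm{pr}_1$ on pairs in $S$ and $\mathrm{pr}_2$ otherwise; the edge condition defining $G_X$ is precisely what makes this choice supportive and compatible with membership in $X$ (no $B$-MIPE can be created because that would require an edge crossing out of $S$).

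The main obstacle is bookkeeping the identification of the two-element sets with $\{0,1\}$ (equivalently, tracking which bijection $B_i\to B_j$ is in play) while propagating along a path: a ``flip'' on one edge has to be matched by a consistent relabelling on the next, and getting the source/sink direction of edges to line up with $\mathrm{pr}_1$ versus $\mathrm{pr}_2$ is where sign errors hide. A secondary subtlety is handling vertices $uu'_k$ and $vv'_l$ with $k=l$ in the strong-connectivity hypothesis — the definition explicitly demands paths even between two pairs drawn from the same coordinate, and this is exactly the ingredient needed to rule out $f_k{\restriction}B_k$ being a non-trivial conservative binary operation ($\land$ or $\lor$) rather than merely forcing global consistency of the projection index. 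I expect the forward direction's path-iteration and the careful casework around $\land/\lor$ versus projections to be the real work; Claim~\ref{last-claim} itself is the short, clean core.
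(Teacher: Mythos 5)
Your proof of the Claim is correct and follows the paper's argument exactly: extract the two repair witnesses from the $B$-MIPE definition, apply the binary aggregator $\bar f$ to this pair of elements of $X\cap B$ in the order that makes $(u,u')$ and $(v,v')$ the actual arguments of $f_k$ and $f_l$, and observe that the output is a feasible evaluation in $X\cap B$ whose restriction to $K$ is $x$, contradicting infeasibility. The ordering subtlety you flag is real (the paper's displayed $\bar f(e,e')$ in fact feeds $f_k$ the pair $(u',u)$ rather than $(u,u')$), and your resolution --- choose the order of the two rows so the hypothesis applies directly, with no appeal to any flip/monomorphism property --- is the right fix.
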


\medskip\noindent {\em Proof of Claim.} By the minimality of $x$ within $B$ if we flip $x_k$ from $u$ to $u'$ or if we flip $x_l$ from $v'$ to $v$, then we get, in both cases,  respective feasible  evaluations within $B$. Therefore,  there are two total evaluations $e$ and $e'$ in $X\cap B$ such that
\begin{itemize}
	 \item $e_k = u'$ and \item $e_s = x_s$ for $s\in K, s \neq k$ (in particular $e_l = v'$), \end{itemize} because we get a feasible evaluation within $B$ by flipping $x_k $ from $u$ to $u'$ and
 \begin{itemize}
 \item $e'_l = v$ and \item $e'_s = x_s$ for $s\in K, s \neq l$ (in particular $e'_k = u$).
 \end{itemize}
This is so because we get a feasible evaluation within $B$ by flipping $x_l$ from  $v'$ to $v$.

If we assume, towards a contradiction,  that $f_k(u,\bar{u}) =u$ and  $f_l(v,\bar{v}) =\bar{v}$,  we immediately have that the evaluation \[\bar{f}(e, e') := (f_1(e_1, e'_1), \ldots, f_m(e_m, e'_m))\] extends $(x_j)_{j \in K}$, contradicting the latter's infeasibility within $B$. This completes the proof of Claim \ref{last-claim}. \hfill$\Box$

From  Claim \ref{last-claim},  we  get that if $u_k \rightarrow\rightarrow v_l$ and $f_k(u,\scaledbar{u}) =u$, then $f_l(v,\scaledbar{v}) =v$ (even if $k=l$). From this, it immediately follows that if $G_X$ is strongly connected, then every binary aggregator of $X$ is dictatorial.

We will now prove Direction $2 \Longrightarrow 1$ of Theorem \ref{thm:tot-block} , namely, that if $X$ is not totally blocked, then there is a  non-trivial binary aggregator (this part is contained in \cite[Theorem 2]{dokow2010aggregationnonB} --Theorem  \ref{D-H-B} above).
Since $G_X$ is not strongly connected, there is a partition of the vertices of $G_X$ into two mutually disjoint and non-empty subsets $V_1$ and $V_2$ so that there is no edge from a vertex of $V_1$ towards a vertex in $V_2$. We now define a  $\bar{f} = (f_1, \ldots, f_m)$, where $f_k: A_k^2 \mapsto A_k$,  as follows:
\begin{equation}\label{eqfk1}
	f_k( u,u') = \begin{cases} u &\mbox{ if } u,u' \in X_k \text{ and }  uu'_k \in V_1 \text{ and } u \neq u', \\   u' & \text{ if }  u,u' \in X_k \text{ and } \mbox{ if } uu'_k \in V_2 \text{ and } u \neq u',\\ u & \text { if }u=u' \text{ or } u  \in A_k \setminus X_k \text{ or } u' \in A_k \setminus X_k.  \end{cases}
\end{equation}
In other words, for two differing values $u$ and $u'$ in $X_k$, the function $f_k$ is defined as the projection on the first coordinate if $uu'_k \in V_1$, and as the projection onto the second coordinate if $uu'_k \in V_2$; we also define $f_k(u,u) = u$ if $u=u'$ or if  either  $u$ or $u'$ is not in $X_k$ (i.e., when at least one  of them is  not a projection onto the $k$-th coordinate  of an element of $X$, in this latter case the value of $f_k(u,u')$ can be arbitrarily defined, as it has no effect on the properties of $\bar{f}$).

 Notice that $\bar{f}$ is non-trivial, because $V_1$ and $V_2$ are not empty.

All that remains to be shown is that $X$ is closed under $\bar{f}$, i.e., if $e = (e_1, \ldots, e_m),e' =(e'_1, \ldots, e'_m) \in X$ are two total feasible evaluations, then \begin{equation}\label{toprove} \bar{f}(e, e')  := (f_1(e_1, e'_1), \ldots f_m(e_m, e'_m)) \in X.\end{equation}
Let \[L = \{j=1, \ldots, m \mid  e_j \neq e'_j\}.\] For an arbitrary $j \in L$,
   define  $\text{vertex}_j(e, e')$ to be the vertex $uu'_j$ of $G_X$, where   $u = e_j$ and $u' = e'_j$ .

If now $\bar{f}(e, e') = e$ or if $\bar{f}(e, e') = e'$, then obviously
\eqref{toprove} is satisfied.
So assume that \begin{equation}\bar{f}(e, e') \neq e \text{ and }  \bar{f}(e, e') \neq e'.\end{equation}
Also, towards showing \eqref{toprove} by contradiction, assume \begin{equation}\label{contr}\bar{f}(e, e') \not\in X. \end{equation}
Define now a 2-sub-box $B = (B_j)_{j=1, \ldots,m}$ as follows:

\begin{equation}\label{B} B_j = \begin{cases}\{e_j, e'_j\} & \text{ if } e_j \neq e'_j, \\ \{e_j, a_j\} & \text{ otherwise }, \end{cases} \end{equation}
where $a_j$ is an arbitrary element $\neq e_j$  of $X_j$ (the latter choice is only made to ensure that $|B_j| =2$ in all cases).

Because of \eqref{contr} and \eqref{B}, we have that
$\bar{f}(e, e')$ is a total evaluation infeasible within $B$.  Towards constructing a $B$-MIPE, delete one after the other (and as far as it can go) coordinates
of $\bar{f}(e, e')$, while taking care not to destroy infeasibility within $B$. Let $K \subseteq \{1, \ldots, m\}$ be the subset of coordinate indices that remain at the end of this process. Then the partial evaluation
\begin{equation}\label{x} x := \left(f_j(e_j, e'_j)\right)_{j \in K}\end{equation}
is infeasible within $B$. Therefore, lest
$e$ or $e'$ extends $x = \left(f_j(e_j, e'_j)\right)_{j \in K}$ (not permissible because the latter partial evaluation is infeasible),  there exist $k,l  \in K$ such that \begin{equation}\label{els}
 	e_k \neq e'_k \text{ and } e_l \neq e'_l \end{equation} and also \begin{equation}\label{ls} f_k(e_k, e'_k) = e_k \text{ and } f_l(e_l, e'_l) = e'_l. \end{equation}
 But then if we set \begin{equation}\label{ls2}u = e_k, u' = e'_k, v = e_l, v' = e'_l,\end{equation}
 we have, by \eqref{eqfk1}, \eqref{els}, \eqref{ls} and \eqref{ls2}, that \begin{equation}\label{last}\text{vertex}_k(e, e') = uu'_k \in V_1\text{ and }  \text{vertex}_l(e, e') = vv'_l \in V_2\end{equation}
 and, by \eqref{eqfk1}, \eqref{ls} and  \eqref{ls2},    we get that
 \[uu'_k \underset{B,x,K}{\longrightarrow} vv'_l\]
 which by \eqref{last} is a contradiction, because we get an edge from $V_1$ to $V_2$. This completes the proof of Theorem \ref{thm:tot-block}.
\end{proof}

\section{Uniformly Possibility Domains} \label{upd:sect}
We start with the following lemma:
\begin{lemma}[Superposition of aggregators]\label{lem:superposition} 
Let $\bar{f} = (f_1, \ldots, f_m)$ be an $n$-ary aggregator and let \[\overline{h^1} = (h^1_1, \ldots, h^1_m), \ldots, \overline{h^n} = (h^n_1, \ldots, h^n_m)\] be $n$ $k$-ary aggregators (all on $m$ issues). Then the $m$-tuple of $k$-ary functions 	$(g_1, \ldots, g_m)$ defined by:
\[g_j(x_1, \ldots, x_k) = f_j(h_j^1(x_1, \ldots, x_k), \ldots, h_j^n(x_1, \ldots, x_k)), j =1, \ldots, m\] is also an aggregator.
\end{lemma}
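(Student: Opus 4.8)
The plan is to verify directly that the composed $m$-tuple $\bar{g} = (g_1, \ldots, g_m)$ satisfies the two defining properties of an aggregator for $X$: supportiveness and closure of $X$ under coordinatewise application. Both will follow from the corresponding properties of $\bar f$ and of the $\overline{h^i}$, the only real work being to keep the row/column bookkeeping of the matrices straight.

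First I would check supportiveness. Fix $j \in \{1, \ldots, m\}$ and an arbitrary column vector $x_j = (x_j^1, \ldots, x_j^k) \in A_j^k$. Since each $\overline{h^i}$ is supportive, we have $h_j^i(x_j^1, \ldots, x_j^k) \in \{x_j^1, \ldots, x_j^k\}$ for every $i = 1, \ldots, n$; hence the $n$-tuple $\bigl(h_j^1(x_j), \ldots, h_j^n(x_j)\bigr)$ consists entirely of entries drawn from $\{x_j^1, \ldots, x_j^k\}$. Applying supportiveness of $f_j$ to this $n$-tuple yields $f_j\bigl(h_j^1(x_j), \ldots, h_j^n(x_j)\bigr) \in \{h_j^1(x_j), \ldots, h_j^n(x_j)\} \subseteq \{x_j^1, \ldots, x_j^k\}$, i.e. $g_j(x_j) \in \{x_j^1, \ldots, x_j^k\}$. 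As $j$ was arbitrary, $\bar g$ is supportive.

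Next I would check that $X$ is closed under $\bar g$. Let $x^1, \ldots, x^k \in X$, viewed as the rows of a $k \times m$ matrix with columns $x_1, \ldots, x_m$. For each $i = 1, \ldots, n$, apply the aggregator $\overline{h^i}$ to this matrix: because $\overline{h^i}$ is an aggregator for $X$, the row vector $y^i := \bigl(h_1^i(x_1), \ldots, h_m^i(x_m)\bigr)$ belongs to $X$. Now assemble $y^1, \ldots, y^n$ as the rows of an $n \times m$ matrix, whose $j$-th column is $y_j = \bigl(h_j^1(x_j), \ldots, h_j^n(x_j)\bigr)$. Since $\bar f$ is an $n$-ary aggregator for $X$, the row vector $\bigl(f_1(y_1), \ldots, f_m(y_m)\bigr)$ lies in $X$. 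Finally, by the definition of $g_j$ we have $f_j(y_j) = f_j\bigl(h_j^1(x_j), \ldots, h_j^n(x_j)\bigr) = g_j(x_j)$ for each $j$, so $\bigl(g_1(x_1), \ldots, g_m(x_m)\bigr) \in X$, which is exactly what closure under $\bar g$ requires.

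Combining the two parts, $\bar g$ is supportive and $X$ is closed under it, hence $\bar g$ is a ($k$-ary) aggregator for $X$, completing the proof. The argument is entirely routine; the only point demanding care is the matrix manipulation — noting that the $\overline{h^i}$ act on the same $k$ input rows and produce $n$ output rows, which then become the input rows for $\bar f$ — and there is no genuine obstacle beyond getting that indexing right.
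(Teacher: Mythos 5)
Your proof is correct and follows essentially the same route as the paper: apply the $\overline{h^i}$ to the $k\times m$ input matrix to obtain $n$ rows of $X$, then apply $\bar f$ to the resulting $n\times m$ matrix. You additionally verify supportiveness explicitly (via $g_j(x_j)\in\{h_j^1(x_j),\ldots,h_j^n(x_j)\}\subseteq\{x_j^1,\ldots,x_j^k\}$), a step the paper leaves implicit; this is a welcome completion rather than a divergence.
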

\begin{proof}
Let $x^l_j, l=1, \ldots, k, j=1, \ldots, m$ be a $k\times m$ matrix whose rows are in $X$. Since the $\overline{h^i}, i=1, \ldots, n$ are $k$-ary aggregators, we conclude that for all $i=1, \ldots, n$, \[(h_1^i(x^1_1, \ldots, x_1^k), \ldots, h_m^i(x_m^1, \ldots, x_m^k)) \in X.\]	
We now apply the aggregator $\bar{f} = (f_1, \ldots,  f_m)$ to the $n\times m$ matrix 
\[h^i_j(x_j^1, \ldots, x_j^k), i=1, \ldots, n, j=1, \ldots, m,\] 
which concludes the proof.
\end{proof}

Using the above lemma we will assume below, often tacitly,  that various tuples of functions obtained by superposition of aggregators with other aggregators,  like projections, are  aggregators as well. 

We now prove three lemmas: 
\begin{lemma}\label{lem:four_functions}
Let $A$ be an arbitrary set and $f: A^3 \mapsto A$ a ternary supportive operation on $A$, and $B$ a two-element subset of $A$ taken as $\{0,1\}$. Then $f{\restriction}{B}$ is commutative iff $f{\restriction}{B} \in \{ \land^{(3)},\lor^{(3)}, {\rm maj}, \oplus\}$.
	\end{lemma}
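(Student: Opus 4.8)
The plan is to prove both directions of the equivalence by a direct case analysis on the restriction $f{\restriction}{B}$, exploiting the fact that $B$ has only two elements and that $f{\restriction}{B}$ is supportive (conservative). The ``if'' direction is immediate: each of $\land^{(3)}$, $\lor^{(3)}$, ${\rm maj}$, and $\oplus$ is manifestly symmetric in its three arguments, hence commutative in the sense that permuting arguments does not change the value, so in particular $f{\restriction}{B}(x,y,z)$ is invariant under swapping any two of $x,y,z$.

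For the ``only if'' direction, I would argue as follows. Since $f{\restriction}{B}$ is a supportive ternary operation on the two-element set $B = \{0,1\}$, its value on any input $(x,y,z)$ lies in $\{x,y,z\} \subseteq \{0,1\}$; the only inputs where this is a genuine constraint are the ``mixed'' ones, i.e. those containing both a $0$ and a $1$. So $f{\restriction}{B}$ is completely determined by its values on the tuples with exactly one $1$ — namely $(1,0,0),(0,1,0),(0,0,1)$ — and on the tuples with exactly one $0$ — namely $(0,1,1),(1,0,1),(1,1,0)$ — while on $(0,0,0)$ and $(1,1,1)$ supportiveness forces the values $0$ and $1$ respectively. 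Now commutativity (invariance under all permutations of arguments) forces $f{\restriction}{B}$ to take a single common value $a := f{\restriction}{B}(1,0,0) = f{\restriction}{B}(0,1,0) = f{\restriction}{B}(0,0,1)$ on the three weight-one tuples, and a single common value $b := f{\restriction}{B}(0,1,1) = f{\restriction}{B}(1,0,1) = f{\restriction}{B}(1,1,0)$ on the three weight-two tuples, with $a,b \in \{0,1\}$. This leaves exactly four possibilities for the pair $(a,b)$, and I would check that each one corresponds to one of the four named operations: $(a,b)=(0,0)$ gives $\land^{(3)}$, $(a,b)=(1,1)$ gives $\lor^{(3)}$, $(a,b)=(0,1)$ gives ${\rm maj}$, and $(a,b)=(1,0)$ gives $\oplus$. (The last identification uses that on two elements $\oplus(x,y,z) = x+y+z \bmod 2$ returns $1$ iff an odd number of arguments is $1$.)

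I do not anticipate a genuine obstacle here; the lemma is essentially a bookkeeping fact about conservative ternary Boolean operations, and the only point requiring a little care is making precise what ``commutative'' means for a ternary operation — I will take it to mean invariance under transpositions of the arguments, which generates the full symmetric group $S_3$, so that the value depends only on the multiset $\{x,y,z\}$, and hence (on a two-element domain) only on how many of the arguments equal $1$. Once that is pinned down, the six mixed inputs collapse to the two parameters $a$ and $b$ above and the four-way case split closes the proof.
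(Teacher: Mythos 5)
Your proposal is correct and follows essentially the same route as the paper's proof: supportiveness pins down the constant tuples, commutativity collapses the mixed inputs to the two parameters $a$ and $b$, and the four-way case split on $(a,b)$ identifies $\land^{(3)}$, $\lor^{(3)}$, ${\rm maj}$, and $\oplus$. Your explicit remark that commutativity should be read as invariance under all transpositions (hence under all of $S_3$) is a useful clarification that the paper leaves implicit.
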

	\begin{proof}
	Only the  sufficiency of commutativity of $f{\restriction}{B}$ for its  being one of  $\land^{(3)},\lor^{(3)}$, ${\rm maj}, \oplus$ is not absolutely trivial.  Since $f$ is supportive, $f(0,0,0) =0$ and $f(1,1,1) =1$. Assume $f{\restriction}{\{0,1\}}$ is commutative. Let 
	\[f(1,0,0) = f(0,1,0) = f(0,0,1) := a, \text{ and }\] \[f(0,1,1) = f(1,0,1) = f(1,1,0) := b. \]
	By supportiveness, $a,b \in \{0,1\}$.	
	If $a=b=0$, then $f=\wedge^{(3)}$; if $a=b=1$, $f=\vee^{(3)}$; if $a=0$ and $b=1$, $f={\rm maj}$; and if $a=1$ and $b=0$, $f =\oplus$.
\end{proof}
\begin{lemma}\label{lem:commutative}
Let $A$ be an arbitrary set and $f, g: A^3 \mapsto A$ two ternary supportive operations on $A$. Define the supportive as well ternary operation \[h(x,y,z) = f(g(x,y,z), g(y,z,x), g(z,x,y)).\]
If $B$ is a two-element subset of $A$ then $h{\restriction}{B}$ is commutative if either $f{\restriction}{B}$ or $g{\restriction}{B}$ is commutative.	
\end{lemma}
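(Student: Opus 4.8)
The plan is to carry out everything inside the two-element set $B$, identified with $\{0,1\}$ as in Lemma~\ref{lem:four_functions}, and to split on which of $f{\restriction}{B}$, $g{\restriction}{B}$ is commutative (for a ternary operation this means: invariant under every permutation of its three arguments). I will use only two elementary facts about supportive ternary operations: that such a $p$ satisfies $p(w,w,w)=w$, and that a commutative ternary operation has a value depending solely on the multiset of its three arguments.

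The first thing I would isolate is a purely combinatorial observation: for any $x,y,z\in B$, the multiset $\{(x,y,z),(y,z,x),(z,x,y)\}$ of cyclic rotations depends only on the multiset $\{x,y,z\}$. The reason is that, since $|B|=2$, the multiset $\{x,y,z\}$ is either constant or has exactly one repeated value, so it has at most three distinct linear arrangements, and the three cyclic rotations of any single arrangement already enumerate all of them. The consequence I will actually use is: for \emph{any} ternary operation $p$ on $A$ and any permutation $(x',y',z')$ of $(x,y,z)$, the multiset $\{p(x,y,z),p(y,z,x),p(z,x,y)\}$ equals $\{p(x',y',z'),p(y',z',x'),p(z',x',y')\}$, because $(x',y',z')$ has the same underlying multiset, hence the same set of cyclic rotations, hence the same multiset of $p$-values.

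Granting this, the case ``$f{\restriction}{B}$ commutative'' is immediate: $h(x,y,z)=f\bigl(g(x,y,z),g(y,z,x),g(z,x,y)\bigr)$ depends, by commutativity of $f$, only on the multiset $\{g(x,y,z),g(y,z,x),g(z,x,y)\}$, and this multiset is invariant under permutations of $x,y,z$ by the observation; hence $h{\restriction}{B}$ is commutative. For the case ``$g{\restriction}{B}$ commutative'', I would note that on $B$ the three values $g(x,y,z)$, $g(y,z,x)$, $g(z,x,y)$ coincide (each is $g$ of the same multiset); writing $\gamma$ for their common value, supportiveness of $f$ yields $h(x,y,z)=f(\gamma,\gamma,\gamma)=\gamma=g(x,y,z)$, so $h{\restriction}{B}=g{\restriction}{B}$ is commutative.

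I expect the combinatorial observation to be the only step requiring any thought, and its entire force is that three pairwise distinct arrangements cannot be squeezed into two values — so it uses $|B|=2$ essentially, and indeed fails for larger $B$ (a triple of pairwise distinct entries has six arrangements but only three cyclic rotations), which is why the statement is restricted to two-element subsets. Everything else, including the fact that $h$ is itself supportive, is a one-line consequence of supportiveness of $f$ and $g$.
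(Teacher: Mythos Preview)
Your proof is correct and follows the same two-case split as the paper. The paper handles the case ``$g{\restriction}{B}$ commutative'' identically to you; for the case ``$f{\restriction}{B}$ commutative'' the paper simply asserts that $h(x,y,z)=h(y,z,x)=h(z,x,y)$ follows from the definition of $h$, leaving implicit the passage from cyclic invariance to full commutativity on a two-element set --- which is precisely the combinatorial observation you take the trouble to spell out.
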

\begin{proof}
The result is absolutely trivial if 	$g{\restriction}{B}$ is commutative, since in this case, by supportiveness of $f$, $h{\restriction}{B} = g{\restriction}{B}$. If on the other hand $f{\restriction}{B}$ is commutative then easily from the definition of $h$ follows that for any $x,y,z \in B$, $h(x,y,z) = h(y,z,x) = h(z,x,y).$ This form of superposition of $f$ and $g$  appears also in Bulatov \cite[Section 4.3]{bulatov2016conservative}.
\end{proof}
For notational convenience, we introduce the following definition:
\begin{definition}
Let $\bar{f}$ and $\bar{g}$ be two aggregators on $X$. Let $\bar{f} \diamond \bar{g}$ be    the ternary aggregator $\bar{h} = (h_1, \ldots, h_m)$ defined by:
\[h_j(x,y,z) = f_j(g_j(x,y,z), g_j(y,z,x), g_j(z,x,y)), j=1, \ldots,m, \]
 (The fact that $\bar{h} $ is indeed an aggregator follows from Lemma \ref{lem:superposition} and the fact that  a tuple of functions comprised of the same projections is an aggregator.)	
\end{definition}
\begin{lemma}\label{lem:diamond}
Let $\bar{f}$ and $\bar{g}$ be two aggregators on $X$. Let $i,j \in \{1, \ldots, m\}$ two arbitrary issues (perhaps identical) and $B_i, B_j$ two two-element subsets of $X_i$ and $X_j$, respectively. If $f_i{\restriction}{B_i}$ and $g_j{\restriction}{B_j}$ are commutative
 (i.e. by Lemma~\ref{lem:four_functions} if each is one of the  $\land^{(3)},\lor^{(3)}$, ${\rm maj}, \oplus$) 
 then both $\bar{f} \diamond \bar{g} {\restriction}{B_i}$ 
 and $\bar{f} \diamond \bar{g} {\restriction}{B_j}$	are commutative (i.e. each 
 is one of the $\land^{(3)},\lor^{(3)}$, ${\rm maj}, \oplus$).\end{lemma}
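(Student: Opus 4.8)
The plan is to read off both assertions directly from Lemma \ref{lem:commutative}, with essentially no further work. Write $\bar h=(h_1,\dots,h_m):=\bar f\diamond\bar g$, so that for every $k=1,\dots,m$ we have
\[h_k(x,y,z)=f_k\bigl(g_k(x,y,z),\,g_k(y,z,x),\,g_k(z,x,y)\bigr).\]
For each fixed $k$ this is precisely the superposition considered in Lemma \ref{lem:commutative}, with the $f,g,h$ of that lemma instantiated as $f_k,g_k,h_k$ and with $B$ an arbitrary two-element subset of $X_k$. The fact that $\bar h$ is an aggregator is already built into the definition of $\diamond$ (via Lemma \ref{lem:superposition} together with the observation that a tuple of identical projections is an aggregator), so no extra bookkeeping is needed.

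First I would treat the $i$-th component. By hypothesis $f_i{\restriction}{B_i}$ is commutative, so applying Lemma \ref{lem:commutative} with $f:=f_i$, $g:=g_i$ and $B:=B_i$ — using the branch of that lemma in which $f{\restriction}{B}$ is commutative — yields that $h_i{\restriction}{B_i}$ is commutative, which is exactly the claim about $\bar f\diamond\bar g{\restriction}{B_i}$. Symmetrically, for the $j$-th component the hypothesis is that $g_j{\restriction}{B_j}$ is commutative, so Lemma \ref{lem:commutative} with $f:=f_j$, $g:=g_j$, $B:=B_j$ — now using the branch where $g{\restriction}{B}$ is commutative — gives that $h_j{\restriction}{B_j}$ is commutative. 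Finally, invoking Lemma \ref{lem:four_functions} converts ``commutative'' into membership in $\{\land^{(3)},\lor^{(3)},{\rm maj},\oplus\}$, which is the parenthetical reformulation appearing in the statement. The case $i=j$ (with $B_i$ and $B_j$ two possibly distinct two-element subsets of the same $X_i$) needs no separate treatment, since the two applications of Lemma \ref{lem:commutative} are entirely independent of one another.

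There is no genuine obstacle here: the substantive content has been front-loaded into Lemmas \ref{lem:four_functions} and \ref{lem:commutative}, and the only point deserving a moment's care is matching each of the two hypotheses ($f_i{\restriction}{B_i}$ versus $g_j{\restriction}{B_j}$) to the correct ``either/or'' branch of Lemma \ref{lem:commutative} and to the correct component of $\bar h$.
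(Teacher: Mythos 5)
Your proof is correct and is exactly the paper's argument: the paper's own proof of this lemma is the one-liner ``Immediate by Lemmas~\ref{lem:four_functions} and \ref{lem:commutative},'' and you have simply spelled out the intended instantiations, correctly matching $f_i{\restriction}{B_i}$ to the ``$f$ commutative'' branch and $g_j{\restriction}{B_j}$ to the ``$g$ commutative'' branch of Lemma~\ref{lem:commutative}.
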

 \begin{proof}
 Immediate by Lemmas \ref{lem:four_functions} and \ref{lem:commutative}.	
 \end{proof}
 We now restate and prove the characterization of {\un} possibility domains:
 \begin{theorem}\label{thm:updcar}
Let $X$ be a set of feasible voting patterns. The following are equivalent:
\begin{enumerate}
	\item \label{item:one} $X$ is a {\un}  possibility domain.
	\item \label{item:two} For every $j=1, \ldots , m$ and for every two-element subset $B_j \subseteq X_j$, there is an aggregator $\bar{f} = (f_1, \ldots, f_m)$ (depending on $j$ and $B_j$) of some arity such that $f_j{\restriction{B_j}}$ is not a projection function.
	\item \label{item:three} There is ternary aggregator $\bar{f} = (f_1, \ldots, f_m)$ such that for all $j=1, \ldots, m$ and all two-element subsets $B_j \subseteq X_j$, $f_j{\restriction{B_j}} \in \{\land^{(3)}, \lor^{(3)}, {\rm maj}, \oplus\}$   (to which of the four  ternary operations $\land^{(3)}, \lor^{(3)}, {\rm maj}\text{ and }  \oplus$ the restriction $f_j{\restriction{B_j}}$ is equal to depends on $j$ and $B_j$).
	\item \label{item:four} There is ternary aggregator $\bar{f} = (f_1, \ldots, f_m)$ such that for all $j=1, \ldots, m$ and all $x,y \in X_j, f_j(x,y,y) = f_j(y,x,y) = f_j(y,y,x).$
	 \end{enumerate}
\end{theorem}

\begin{proof}
The directions \eqref{item:one}	 $\implies$ \eqref{item:two} and \eqref{item:three}	 $\implies$ \eqref{item:one} are obvious. Also the equivalence of \eqref{item:three} and \eqref{item:four} immediately follows from Lemma \ref{lem:four_functions}. It remains to show \eqref{item:two} $\implies$ \eqref{item:three}. For a two-element subset $B_j \subseteq X_j$,  let  ${\mathcal C}_{B_j}$ be the clone (Lemma \ref{lem:clone})  of the restrictions
$f_j{\restriction{B_j}}$ of the $j$-th components of aggregators $\bar{f} = (f_1, \ldots, f_m)$. By Post \cite{post1941two}, 
we can easily get that ${\mathcal C}_{B_j}$ contains one of the operations $\land, \lor, {\rm maj}\text{ and }  \oplus$. Therefore, easily,  for all $j, B_j$ there is a ternary aggregator $\bar{f} = (f_1, \ldots, f_m)$ 
(depending on $j,B_j$) such that 
$f_j{\restriction}{B_j}$ is one of the $\land^{(3)}, \lor^{(3)}, {\rm maj}\text{ and }  \oplus$. 
Now let $\bar{f}^1, \ldots, \bar{f}^N$ be an arbitrary enumeration of all   ternary aggregators each of which on some  issue $j$ and some two-element $B_j$ is one of the $\land^{(3)}, \lor^{(3)}, {\rm maj}\text{ and }  \oplus$ and  such that the $\bar{f}^l$'s cover all possibilities for $j,B_j$. 
As a ternary operation $\bar{h}$ such  that uniformly for each  $j,B_j$, the restriction  $h_j{\restriction}{B_j}$ belongs to the set  $\{\land^{(3)}, \lor^{(3)}, {\rm maj},  \oplus\}$ we can take, by Lemma~\ref{lem:diamond},  
\[(\cdots (\bar{f}^1 \diamond \bar{f}^2) \diamond \cdots \diamond \bar{f}^N),\]
which concludes the proof.
\end{proof}
 
Now Bulatov's dichotomy theorem \cite[Theorem 2.16]{bulatov2011complexity} in our setting reads:
\begin{dich-thm*}[Bulatov] If for any $j=1, \ldots, m$ and any two-element subset $B_j\subseteq X_j$ there is either a binary  aggregator $\bar{f} = (f_1,\ldots, f_m)$ 
such that $\f_j{\restriction}{B_j} \in \{ \land, \lor\}$ 
or a ternary aggregator $\bar{f} = (f_1,\ldots, f_m)$ such that $\f_j{\restriction}{B_j} \in \{ {\rm maj}, \oplus\}$, then $\text{MCSP}(\Gamma^{\rm cons}_X)$ is tractable; otherwise it is NP-complete.
\end{dich-thm*}

We finally restate and prove Theorem \ref{thm:upd}:

\begin{theorem}\label{thm:upd}
If $X$ is a uniformly possibility domain then $\text{MCSP}(\Gamma^{\rm cons}_X)$ is tractable; otherwise it is NP-complete.
\end{theorem}
\begin{proof}
The tractability part of the statement follows from Bulatov's 	Dichotomy Theorem and item \eqref{item:three} of Theorem \ref{thm:updcar} (observing that $x\land y = \land^{(3)}(x,x,y)$ and similarly for $\lor$ and using Lemma \ref{lem:superposition}), whereas the completeness part follows from Bulatov's 	Dichotomy Theorem and item \eqref{item:two} of Theorem \ref{thm:updcar}.
\end{proof}

\medskip
\noindent{\bf Acknowledgment}~ We are grateful to Mario Szegedy for sharing with us an early draft of his work 
on impossibility theorems and the algebraic toolkit.
We are also grateful to Andrei Bulatov for his bringing to our attention his ``three basic operations" proposition \cite[Proposition 3.1]{bulatov2011complexity}, \cite[Proposition 2.2]{bulatov2016conservative}.


\end{document}